\pgfplotsset{compat=1.8}
\renewcommand{\vec}[1]{\boldsymbol{#1}}
\providecommand{\boldsymbol}[1]{\mbox{\boldmath $#1$}}
\newtheorem{theorem}{Theorem}[section]  
\newtheorem{lemma}[theorem]{Lemma}
\newtheorem{proposition}[theorem]{Proposition}
\newtheorem{remark}[theorem]{Remark}
\newcommand{\opL}{\mathcal{L}}
\newcommand{\R}{\mathbb{R}}
\newcommand{\C}{\mathbb{C}}
\newcommand{\N}{\mathbb{N}}
\newcommand{\Sph}{\mathbb{S}}
\newcommand{\m}{m_0}
\newcommand{\M}{\mathbf{M}}
\newcommand{\K}{\mathbf{K}}
\newcommand{\Nn}{\mathcal{H}_{\varPhi}}
\newcommand{\Samp}{\varSigma}
\newcommand{\Id}{\operatorname{Id}}
\newcommand{\dist}{\operatorname{dist}}
\newcommand{\spn}{\operatorname{span}}
\newcommand{\PhiB}{\mathbf{\Phi}}
\newcommand{\PsiB}{\mathbf{\Psi}}
\newcommand{\dif}{\mathrm{d}}
\newcommand{\Manoa}{M\=anoa}
\newcommand{\Hawaii}{Hawai\kern.05em`\kern.05em\relax i }
\title{Highly Localized RBF Lagrange Functions  for Finite Difference Methods on Spheres}
\author{ W.~Erb 
\thanks{Dipartimento  di Matematica ``Tullio Levi-Civita'', Universit{\`a} degli Studi di Padova, 
Via Trieste 63, 35121 Padova, Italy (erb@math.unipd.it).},
 T.~Hangelbroek
\thanks{Department of Mathematics, University of \Hawaii   -- \Manoa,
 Honolulu, HI 96822, USA (hangelbr@math.hawaii.edu).   
Research supported by  by grants DMS-1716927  and DMS-2010051 from the National
    Science Foundation.},
    F. J.~Narcowich\thanks{ Department of Mathematics, Texas A\&M
    University, College Station, TX 77843, USA (fnarc@math.tamu.edu). Research
    supported by grant DMS-1813091 from the National
    Science Foundation.}, 
    C.~Rieger\thanks{Philipps-Universit\"at Marburg, 
    Department of Mathematics and Computer Science,
Hans-Meerwein-Stra\ss{}e 6, 35032 Marburg, Germany (riegerc@mathematik.uni-marburg.de).},
J. D.~Ward\thanks{ Department of Mathematics, Texas A\&M University,
    College Station, TX 77843, USA (jward@math.tamu.edu). Research supported by
    grant DMS-1813091  from the National Science
    Foundation.}} 
\date{\today}                                           % Activate to display a given date or no date
\begin{document}

\maketitle
\begin{abstract}
The aim of this paper is to show how 
rapidly decaying RBF Lagrange functions on the 
spheres can be used to create effective, stable finite difference methods based on radial basis functions (RBF-FD). 
For certain classes of PDEs this approach leads to precise convergence estimates for stencils which grow moderately with 
increasing discretization fineness. 
%In addition, we point out the connection to the classical 
%Kansa's methods if RBF-FD is used as a classical finite difference method. 
%Finally, we will provide a framework to derive error estimates for 
% the solution of partial differential equations using the RBF-FD method.
\end{abstract}
\section{Introduction}
The RBF-FD method is a modification of the classical finite difference method suitable for working with unstructured point sets.
Instead of using mesh-based finite difference operators which have some fixed degree of polynomial exactness, 
one enforces exactness on finite dimensional spaces generated by kernels of radial basis functions. 
In its simplest
form, this means finding a matrix, the {\em RBF-FD matrix},
which represents the differential operator on a finite dimensional
kernel space.
%This is done 
%in lieu of a finite dimensional space of polynomials in $\R^d$ or a Fourier basis on a manifold without boundary--  especially 
%$\mathbb S^2$, which we 
%will focus on here.
 
%Here, we will focus on time independent partial differential equations.  
To illustrate this setup, we consider a time-independent partial differential equation
on a manifold without boundary (such as the sphere $\Sph^2$)
$$\opL u = f.$$
The PDE is replaced by  a linear system
 $$\M \tilde{u} = y$$
 which can then be solved for a discrete solution $\tilde{u}$.
The RBF-FD matrix 
$\M\in \R^{M\times N}$ encodes $\opL$ on a finite dimensional space $V_{X_N} = \mathrm{span}_{x_j\in X} k(\cdot,x_j)$
generated by a kernel $k$. 
The vector $y=f|_{Y_M}$ is obtained from $f$ by sampling at a discrete set of  points $Y_M$.
A suitable approach will have $M=\#Y_M\ge N=\#X_N$, although one may consider square systems with $Y_M=X_N$, which is what we
do throughout this paper.

 We note that this completely avoids numerical quadrature,
 and, by working coordinate free, is algorithmically straightforward.
 So as a  method, this  is 
 a
 computationally  efficient method of solving differential equations on 
manifolds.
\paragraph{Stability, consistency and convergence}
As with classical finite difference methods, 
convergence in this setting is ensured by 
 consistency of the method and stability of the matrix $\M$.
In many cases, the use of positive definite kernels gives an automatic and satisfying consistency theory.
The basic theoretical challenge is that the RBF-FD matrices are, in general,  not even known to be  invertible, much
less stably invertible.
%In other words, the resulting discrete problem 
%may not have a guaranteed solution, or obtaining the solution involves undesirable instability. 

This poses a substantial challenge for time dependent problems as well, 
where $\M$ may be  used to generate a system of ODEs % (referred to as the method of lines) 
which can then be solved 
with classical integrators. We will not focus on those issues here, although  the problems we address
in this paper (namely stability and invertibility of the underlying systems) are also well-known in this context.
 There  exist a number of creative modifications  to attack instability, including 
oversampling and regularization,
careful grid and parameter selection,
hyperviscosity methods   \cite{chu,tominec_hyperbolic,tominec,shankar}.

A major goal of this paper is to give conditions on the operator $\opL$ which guarantee stable invertibility of $\M$, 
and provide satisfying convergence rates. 

 \paragraph{Sparse systems}
 Another challenge involves constructing sparse RBF-FD matrices by having exactness
only on a (very small, spatially dependent) subspace of the full kernel space; i.e., by making use of
a ``stencil'' which is very local in space. 

\medskip

{\em Full stencils} This involves constructing a global matrix $\M$ which represents $\opL$ 
on the $N$-dimensional space $V_{X_N}$.
For each $y_j\in Y_M$, the corresponding  row of $\M$  is full. %: the functional $\delta_{y_j} \opL$ is exact on the .  
This method has theoretically advantages, but is less common in practice,
due to the high computational cost of constructing $\M$ and working with it.  It has the advantage of being highly consistent, 
with consistency bounds resulting from the well-developed theory of kernel interpolation. 
(See section \ref{SS:Consistency} for a precise discussion of this.)

\smallskip

{\em Small stencils} This involves construction of a very sparse matrix $\M^{\circ}$,
 whose rows are determined by representing $\opL$ at a point, using only the (low dimensional)
space generated by the immediate neighbors of the point.
More precisely, exactness at $y_j$ is determined on
$\mathrm{span}_{x_k\in \mathcal{N}( y_j)} k(\cdot,x_k)$, where $\mathcal{N}(y_j)$ is a set of nearby neighbors of $y_j$.
This has the advantage of driving down the computational cost for constructing the RBF-FD matrix -
roughly $\mathcal{O}(N n^3)$ operations if for each $j$,  $\# \mathcal{N}( y_j)=n$.
This may make the resulting linear systems easier to solve, but at the  cost of 
the theoretical consistency bounds.
A generally open question is how sparse to make such a system, while providing suitably
good convergence rates. 

\smallskip

%A major challenge is proving that the RBF-FD method is stable, and
%this is true for both the global approach (using $\M$) and the local approach (using $\M^{\circ}$).
%On the other hand, 

\paragraph{Focus of this paper}

This paper addresses both of the above challenges in the context 
of  the sphere $\mathbb S^2$. 
In this case, the kernels employed are called SBFs (spherical basis functions), and 
we naturally refer to the SBF-FD method (in place of the RBF-FD method).
%There  are several reasons for doing this. 
%The sphere and PDEs on it are very important in their own 
%right, and, having no boundary, enables us to clearly  illustrate our 
%methods. 
Although we do not treat the issue of small stencils directly, we consider
a closely related challenge, which is the second focus of this paper: the challenge of easing 
computational complexity of the problem by providing a sparse alternative to the full-stencil RBF-FD matrix. 
The issue of sparsity is treated by employing  rapidly decaying RBF Lagrange functions,
which  have been constructed and thoroughly analyzed in 
 \cite{FHNWW,HNRW1,HNRW2}.

 \paragraph{Main contributions}
 We provide the first (to our knowledge) positive RBF-FD  results in the classical stability/consistency framework (there exist other
approaches which treat convergence directly -- see \cite{DAVYDOV2019}).
 A (measured) stability result for Helmholtz operators
 is given in section \ref{S:stability}. 
 
 Additionally, we  consider  stencils obtained 
from local Lagrange functions. 
This leads to rapidly constructed FD matrices $\M^{\sharp}$ which, although not as sparse as $\M^{\circ}$, 
 enjoy rapid off-diagonal decay.
Moreover, these perform as well as the full FD matrices $\M$  created  directly from RBFs themselves. 
When applied to Helmholtz operators, we demonstrate that
both full and local stencil schemes using 
 kernels with localized Lagrange functions
provide $L_2$ convergence.

 \smallskip

\smallskip
\paragraph{Overview}
The paper is organized as follows. 
Section 2 provides necessary background for analysis on the sphere and positive
definite ``spherical basis functions'' (SBFs).
In section 3, we describe the
the domain independent 
 SBF-FD setup.
Here we discuss the construction of general
 SBF-FD matrices,
their potential singularity via a basic factorization, and then the consistency of this method.
Section 4 treats the challenge of stability. 
We provide stability
 result for a specific class of differential operators (Helmholtz operators),
 which is stronger than anything we have seen in the literature, and
 substantial enough to use in our convergence results.
  Section 5 introduces the  local Lagrange basis and the associated local SBF-FD method.
  Here we show that the local SBF-FD matrices $\M^{\sharp}$ are, up to a constant, as stable as the full-stencil analogues $\M$.
Section  6 contain the main  convergence result of 
the paper.
\section{Background on kernel methods}\label{Background}

\subsection{The Sphere}
We denote $\Sph^2 = \{x\in \R^2\mid |x|= 1\}$. The distance function on the sphere is  
$\dist(x_1,x_2) := \arccos (x_1\cdot x_2)$ for $x_1,x_2\in \Sph^2$. 
The basic neighborhood is $B(x,r):=\{y\in\Sph^2: \dist(x,y)<r\}$; 
it has volume given by the formula $\mu(B(x,r)) = 2\pi(1-\cos r)$.

The sphere has the usual spherical coordinate 
parametrization by $\theta$ and $\varphi$,
with $x = \cos \theta \sin \varphi$, $y= \sin\theta \sin \varphi$ and $z=\cos \varphi$.
Lebesgue measure is $\dif \mu= \sin(\varphi)\dif\theta \dif \varphi$.

The Laplace-Beltrami operator is
$\Delta = \frac{1}{\sin^2(\varphi)}\frac{\partial^2}{\partial \theta^2} 
+\frac{1}{\sin(\varphi)}\frac{\partial}{\partial  \varphi}\sin(\varphi) \frac{\partial}{\partial \varphi}$.
It is self-adjoint, negative semi-definite.

For each $\ell\in \N$,
$\nu_\ell:=-\ell(1+\ell)$ is an eigenvalue of  the Laplace-Beltrami operator.
The corresponding eigenspace %$\mathcal{H}_{\ell}$ 
has an orthonormal
basis of $2\ell+1$ eigenfunctions,
$\{Y_{\ell}^\mu\}_{\mu=-\ell}^{\ell}$.
called {\em spherical harmonics} 
%$Y_{\ell}^{\mu}:\Sph^2\to \R$ 
of degree $\ell$ defined by
\begin{equation}\label{spherical_harmonic}
Y_{\ell}^\mu(x):=
\begin{cases}
a_{\ell}^{\mu}P_{\ell}^{\mu}(\cos\varphi)\cos(\mu \theta)&\mu\ge 0\\
a_{\ell}^{|\mu|}P_{\ell}^{|\mu|}(\cos \varphi)\sin(\mu \theta)&\mu<0.
\end{cases}
\end{equation}
Here $P_{\ell}^{|\mu|}$ is the $\mu$th associated Legendre polynomial of degree $\ell$, and $a_{\ell}^{|\mu|}$ is a normalization factor.
The space of spherical harmonics of degree
$\ell\le M$ is denoted $\Pi_M := \mathrm{span}\{Y_{\ell}^{\mu}\mid |\mu|\le \ell, \ell\le M\}$ 
and has dimension $(M+1)^2$. 
Using the definition (\ref{spherical_harmonic}), the collection $(Y_{\ell}^{\mu})_{\ell\ge 0,|\mu|\le \ell}$ forms an orthonormal basis for $L_2(\Sph^2)$. 

\paragraph{Sobolev spaces} 
The Sobolev space $H^m(\Sph^2)$ is defined as $\{f\in L_2(\Sph^2)\mid \|f\|_m<\infty\}$ where
the norm $\|f\|_m$ is induced from the inner product $(f,g)\mapsto \langle f,g\rangle_m$, which is defined  for $f= \sum a_{\ell,\mu} Y_{\ell}^{\mu}$ and $g= \sum b_{\ell,\mu} Y_{\ell}^{\mu}$ as
$$\langle f,g\rangle_m 
:= 
\sum_{\ell=0}^\infty \sum_{|\mu|\le \ell} {a_{\ell,\mu} b_{\ell,\mu}}{(1+|\nu_{\ell}|)^m}.$$ 
The space $H^m(\Sph^2)$ is a Hilbert space; for $m>1$, we have
the continuous embedding $H^m(\Sph^2)\subset C(\Sph^2)$.

\paragraph{Point sets} 
For $\Omega\subset \Sph^2$ and finite subset $X\subset \Omega$, 
we define the {\em fill distance} of $X$ in $\Omega$
as $$h:= \max_{x\in \Omega} \dist(x,X) = \max_{x\in \Omega}\min_{x_j\in X} \dist(x,x_j).$$
The separation radius is 
$$q:= \frac12\min_{x_j\in X}\min_{x_k\neq x_j}\dist(x_j,x_k).$$
Throughout the paper, we consider quasi-uniformly distributed point sets $X$, i.e., point sets for which the mesh ratio $\rho:= h/q$ is bounded (although $h$ and $q$ may be very small). Specifically,
we assume there is a constant $\rho_0$ so that for any $X\subset \Sph^2$, $\rho\le \rho_0$.

\subsection{Positive definite kernels}
Let $\varPhi:\Sph^2 \times \Sph^2 \to \R$ be a continuous positive definite kernel, meaning that for every
finite set 
$X_N=\{x_1,\dots,x_N\}\subset \Sph^2$
(with $\#X_N = N$) the 
collocation matrix 
$$
\PhiB_{X_N} 
:=\bigl(\varPhi(x_j,x_k)\bigr)_{j,k=1\dots N} =
\begin{pmatrix}  \varPhi(x_{1},x_{1}) & \dots & \varPhi(x_{1},x_{N}) \\
\vdots & \ddots & \vdots  \\
 \varPhi(x_{N},x_{1}) & \dots &  \varPhi(x_{N},x_{N})
\end{pmatrix} 
$$
is strictly positive definite (see \cite{NW} for background). 

There is an associated  reproducing kernel Hilbert  space, called the {\em native space},
$\mathcal{H}_{\varPhi}(\Sph^2) \subset C(\Sph^2)$ with inner product 
$(f,g)\mapsto \langle f,g\rangle_{\varPhi}$
for which $\varPhi$ is the reproducing kernel: for all $f\in  \mathcal{H}_{\varPhi}(\Sph^2)$, 
 $f(x) = \langle f,\varPhi(x,\cdot)\rangle_{\varPhi}$.
It follows that for each $N$-set $X_N$, the  linear space
\begin{align}\label{trialspace}
S({X_{N}}) %:=\varSigma^{\star}_{\Xi_{N}}\left(\ell_{2}(\Xi_{N}) \right)
:=\spn\left\{\varPhi(\cdot,x_{j}) \ : \ x_{j} \in X_{N} \right\} \subset \mathcal{H}_{\varPhi}(\Sph^2)
\subset C(\Sph^2),
\end{align} 
is $N$-dimensional.

The kernels we will consider in this article are spherical basis functions (SBFs), which have the form
$\varPhi(x,y) = \phi(x\cdot y)$ for some continuous univariate function $\phi:[-1,1]\to \R$; 
these can emerge, for instance, as the restriction to $\Sph^2$ 
of a radial basis function (RBF -- a translation, rotation invariant kernel) on $\R^3$. 
%They have a Hilbert-Schmidt expansion 
%\begin{equation}\label{HS}
%\varPhi(x,y) =\sum_{\ell=0}^{\infty} \sum_{m=-\ell}^{\ell} a_{\ell,m} Y_{\ell}^m(x) Y_{\ell}^m (y)
%\end{equation}
% with positive coefficients $a_{\ell,m}$. 
They have a Hilbert-Schmidt expansion 
\begin{equation}\label{HS}
\varPhi(x,y) =\sum_{\ell=0}^{\infty} \sum_{m=-\ell}^{\ell} \widehat{\phi}(\ell) Y_{\ell}^m(x) Y_{\ell}^m (y)
\end{equation}
 with positive coefficients $\widehat{\phi}(\ell)$.  
 For $f = \sum a_{\ell,\mu} Y_{\ell}^{\mu}$ and $g= \sum b_{\ell,\mu} Y_{\ell}^{\mu}$, 
 the inner product on $\mathcal{H}_{\varPhi}(\Sph^2)$
 is 
 $$\langle f,g\rangle_{\mathcal{H}_{\varPhi}} = 
 \sum_{\ell=0}^{\infty} \sum_{|\mu|\le \ell} \frac{a_{\ell,\mu} \overline{b_{\ell,\mu}}}{\widehat{\phi}(\ell)} .$$
 See Appendix \ref{SS:shape} for a number or concrete examples of positive definite SBFs.

We identify now two important maps associated with the point set $X_N$.
The first is simply the sampling operator (restriction to $X_N$) which we denote 
$\varSigma_{X_N}$, so $\varSigma_{X_N}f := f\left|_{X_N}\right.$; 
it is continuous as a map from $C(\Sph^2)$ 
(and hence $\mathcal{H}_{\Phi}$) to $\C^{N}$.
The interpolation operator 
$
{I}_{X_N} 
: C(\Sph^2)\to S({X_{N}})$
 maps $f$ to its unique $X_N$ interpolant in $S(X_N)$.
Namely, ${I}_{X_N}f  = \sum_{j=1}^N a_{j} \varPhi(\cdot, x_j) $ 
where $\vec{a}=(a_j)_{j=1\dots N}$
is the unique solution to $\PhiB_{X_N}\vec{a}  = \Sigma_N f$.

When restricted to $\mathcal{H}_{\varPhi}\subset C(\Sph^2)$, 
the interpolation operator ${I}_{X_N} $
is the orthogonal
projector onto $S({X_{N}})$ (with respect to the $\mathcal{H}_{\varPhi}(\Sph^2)$ inner product).
This is the basis for a number of theoretical results, like error estimates and stability bounds.
In particular, if $\mathcal{H}_{\varPhi} = H^m(\Sph^2)$ is a Sobolev space, 
then for all $f\in H^m(\Sph^2)$, we have
$$\|f- I_{X_N} f\|_{L_p(\Sph^2)} \le C h^{m-(\frac12-\frac1p)_+} \|f\|_{H^{m}(\Sph^2)}.$$
Stronger results are possible: namely,
\cite[Theorem 5.5]{NSWW} 
shows that if $1<\beta\le m$ and $\tau\le \beta$ then
there is a constant $C$ so that 
for all $f\in H^m(\Sph^2)$
$$\|f- I_{X_N} f\|_{H^\tau(\Sph^2)} \le C h^{\beta-\tau} \|f\|_{H^{\beta}(\Sph^2)}.$$

\subsection{Conditionally positive definite kernels}
A modest change to the SBF theory presented in Section \ref{Background} is possible,
by  relaxing the requirement of strict positive definiteness. 
 A kernel $\varPhi:\Sph^2\times\Sph^2\to \R$ is {\em conditionally positive definite} if 
 for any reasonable point set $X_N$, the matrix $\PhiB_{X_N} = (\varPhi(x_j,x_k))_{j,k}$ 
  is positive definite  on the complement of a certain space of (small) fixed dimension. 
 This space of fixed codimension, and thereby the conditional positive definiteness, is  precisely 
 described using low order spherical harmonics as follows: 
 for  nonzero vectors $\vec{a}\in \C^{N}$ satisfying the ``moment conditions''
\begin{equation}\label{side_condition}
 (\forall \ell \le \tilde{m}),\ (\forall |\mu|\le \ell)\  \sum_{j=1}^N a_j Y_{\ell}^{\mu}(x_j)=0
 \end{equation}
the quadratic form induced by $\PhiB_{X_N}$
is strictly positive: i.e., $\vec{a}^T \PhiB_{X_N} \vec{a} > 0$  for all $\vec{a}\neq 0$ which satisfy (\ref{side_condition}).
In this case, we say that $\varPhi$ is conditionally positive definite of order $\tilde{m}$.

On a  practical level, in order to solve interpolation problems with $\varPhi$
we need only to augment 
the collocation matrix $\PhiB_{X_N}$ by 
a Vandermonde style matrix
 $\mathbf{P}= \bigl(P_k (x_j)\bigr)_{j\le N, k\le N_{\tilde{m}}}
\in \R^{N\times N_{\tilde{m}}}$, where $\{P_k\mid 1\le k \le N_{\tilde{m}}\}$ is a basis for $\Pi_{\tilde{m}}$
(we may take $\mathbf{P} = \bigl(Y_{\ell}^{\mu}(x_j)\bigr)$, although any other choice of basis will suffice).
Here
$N_{\tilde{m}} := (\tilde{m}+1)^2=\dim \Pi_{\tilde{m}}$ is the
 dimension of spherical harmonics of degree $\tilde{m}$ or less.
The augmented interpolation matrix  
$$\tilde{\PhiB}_{X_N}
:=
\begin{pmatrix} \PhiB_{X_N} &\mathbf{P}\\ \mathbf{P}^T &\mathbf{0}\end{pmatrix}
%\quad\text{where}\quad
%\mathbf{P} = \bigl(P_k (x_j)\bigr)_{j\le N, k\le ( \tilde{m}+1)^2}
%\bigl(Y_{\ell}^{\mu}(x_j)\bigr)_{j\le N, \ell \le \tilde{m}, |\mu|\le \ell}
$$
is  a nonsingular, square matrix of width 
$ N+N_{\tilde{m}}$.
The solution of the system
 $\tilde{\PhiB}_{X_N} \begin{pmatrix}A\\B\end{pmatrix}= \begin{pmatrix}y\\0\end{pmatrix}$
  provides coefficients for the unique element of the $N$-dimensional space
 $$S(X_N,\tilde{m}) 
 := \left\{\sum_{j=1}^N A_j \varPhi(\cdot, x_j)
 \, \middle|\,
 (\forall P\in \Pi_{\tilde{m}}),\ 
 \sum_{j=1}^N A_j P(x_j)=0\right\}
 +
 \Pi_{\tilde{m}}. 
%  := \left\{\sum_{j=1}^N A_j \varPhi(\cdot, x_j)
% +\sum_{\ell=0}^{\tilde{m}}\sum_{\mu=-\ell}^{\ell} B_{\ell,\mu} Y_{\ell}^\mu
% \, \middle|\,
% (\forall \ell \le \tilde{m}),\ (\forall |\mu|\le \ell)\ 
% \sum_{j=1}^N A_j Y_{\ell}^{k}(x_j)=0\right\}.
 $$
\begin{remark}
 In many cases, the order is fixed with the kernel, and one writes $S(X_N)$ in lieu of $S(X_N,\tilde{m})$.
 \end{remark}

\begin{remark}\label{SPD_is_CPD}
Any (strictly) positive definite kernel  is conditionally positive definite (of any order).
Similarly, a conditionally positive definite kernel of order $\m$ is is conditionally positive definite of order $m'\ge \m$ as well.
To ease the exposition, we adopt the convention that a conditionally positive definite SBF of order $\m=-1$ is strictly positive definite.
\end{remark}
 There is a native space theory for the conditional positive definite setup: the space $\mathcal{H}_{\varPhi}$
 is a reproducing kernel semi-Hilbert space, with semi-inner product $\langle f,g\rangle_{\mathcal{H}_{\varPhi}} = \sum_{\ell>\tilde{m}}\sum_{|\mu|\le \ell} a_{\ell,\mu} \overline{b_{\ell,\mu}} \bigl(\widehat{\phi}(\ell)\bigr)^{-1}$ for $f = \sum a_{\ell,\mu} Y_{\ell}^{\mu}$ and $g= \sum b_{\ell,\mu} Y_{\ell}^{\mu}$.
 We refer to \cite{Wend} for an introduction.

\subsubsection{Restricted thin plate splines}\label{SSS:RTPS}
The restricted thin plate splines
are a prominent class of conditionally positive SBFs. 
They have the form
$\phi(t)= (1-t)^{m-1} \log (1- t)$, leading to conditionally positive definite 
kernels $\varPhi(x,y) = \phi(x\cdot y)$, which are 
conditionally positive definite of order $\tilde{m}=m-1$.

For $\ell\ge m$, the coefficients in the Hilbert-Schmidt expansion  (\ref{HS}) of $\phi$ (see \cite[Lemma 3.4]{H-sphere})
are 
\begin{equation}
\label{TPS_Coeff}
\hat{\phi}(\ell) = C\nu_{\ell}(\nu_\ell+2)\dots \bigl(\nu_{\ell} +m(m-1)\bigr)
=
C\prod_{j=0}^{m-1} \bigl(\nu_{\ell}+j(j+1)\bigr).
\end{equation}
Here $C$ is a constant which depends on $m$, but is independent of $\ell$.
We recall that $\nu_{\ell}=-\ell(\ell+1)$ is the $\ell$th eigenvalue of $\Delta$.
It follows that, up to a constant multiple, $\Phi_m$ is the fundamental solution for 
the elliptic operator
$ \prod_{j=0}^{m-1}\bigl(\Delta -j(j+1)\bigr)$
for all functions in $\{f\in H^{2m}(\Sph^2)\mid f\perp \Pi_{m-1}\}$.

%C \frac{(2\ell+1) \Gamma(\ell-m+1)}{\Gamma(m+1+\ell)}$ 

%
%
%
\section{RBF-FD matrix and its factorization}\label{Sect:factoring}
Here, we basically recall material from \cite[Section 5]{fornberg_flyer_2015}, 
which is mostly already contained in \cite{fornberg_1988}.
The main idea of RBF-FD methods is to represent the action of a differential operator 
$\opL$ 
exactly on the space $S({X_{N}})$ from \eqref{trialspace} by a linear map $\opL_{X_N}$.
I.e., to construct a linear map, such that
\begin{align}\label{exactness}
\left.\opL_{X_{N}} \right|_{S({X_{N}})} =\left.\opL \right|_{S({X_{N}})}
\end{align}
holds. 
Such a map can be defined by composition with the interpolation operator:
$
\opL_{X_{N}}  u 
:=
\opL \circ \mathcal{I}_{X_N} (u).
$
%\end{align*}
By construction, we have that \eqref{exactness} holds, i.e. $\opL_{X_{N}}$ equals exactly 
$\opL $ on $S({\Xi_{N}})$. 
The construction of this map can be visualized using the auxiliary space 
\begin{align*}
\opL \left( S({X_{N}}) \right):=
\spn\left\{\opL^{(1)} \varPhi(\cdot,x_{j}) \ : \ x_{j} \in X_{N} \right\} 
\subset 
 H^{m-\m}(\Sph^d).
\end{align*}
The RBF-FD methods consider
the linear mapping which leads to the so-called RBF-differentiation-map.
This is the matrix which represents $\opL_{X_{N}}$ on $\C^N$ in the sense that
$
\Samp_{X_{N}} \opL_{X_{N}}
=
\M_{ X_{N}} 
\Samp_{X_{N}}.
$

Consider now 
$\chi_{x_{j}}:=\chi_{x_{j};X_{N}}$, the  Lagrange function in $S({X_{N}})$
(using the usual standard bases 
$\vec{e} _{j} \in \R^{N},$ 
with 
$\vec{e}_{j}(k)=\delta_{j,k}$  
for $ 1\le j,k\le N$ ).
Each Lagrange function also can be expressed in the standard kernel basis, i.e., we have
\begin{align*}
\chi_{{x}_{j}} 
= \sum_{k = 1}^{N} A_{{j},k} \varPhi(\cdot,x_k)
\qquad
 \Rightarrow 
 \qquad 
 \opL \,
 \chi_{{x}_{j}}
 =
 \sum_{k = 1}^{N} A_{{j},k} \opL^{(1)}\varPhi(\cdot,x_k) 
\end{align*}
where  
$
A_{{j},{k}} 
$
is the $j,k$ entry of  
$(\PhiB_{X_{N}})^{-1}$.
With respect to these bases and the notation from above, 
we get that the RBF-differentiation-matrix satisfies
$\M_{X_{N}} \vec{e} _{j}  = \Samp_{X_N} \mathcal{L} \chi_{x_j}$
so the matrix has the form
\begin{align}\label{rbffd:lagrange}
\M_{X_{N}}
=
\begin{pmatrix}
\opL \chi_{x_{1}}({x}_{1}) 
& 
\dots 
& 
\opL \chi_{x_{N}}({x}_{1}) \\
\vdots & \ddots & \vdots  \\
\opL  \chi_{x_{1}}({x}_{N}) 
& 
\dots 
& 
\opL \chi_{x_{N}}({x}_{N})
\end{pmatrix} 
\in \R^{N \times N}.
\end{align}
By rewriting 
$ 
\opL \chi_{x_k}(x_j) = \sum_{\ell=1}^N  A_{{j},{\ell}} \opL^{(1)}  \varPhi(x_j, x_{\ell})
$
the matrix can  be factored as 
\begin{align*}
\M_{X_{N}} = 
\begin{pmatrix}
\opL^{(1)}  \varPhi(x_{1},{x}_{1}) & \dots & \opL^{(1)} \varPhi(x_{N},{x}_{1}) \\
\vdots & \ddots & \vdots  \\
\opL^{(1)} \varPhi(x_{1},{x}_{N}) & \dots & \opL^{(1)} \varPhi(x_{N},{x}_{N})
\end{pmatrix} 
(\PhiB_{X_N})^{-1}
=: 
\K_{X_N}\PhiB^{-1}_{X_{N}},
\end{align*}
where 
$ \K_{X_N} 
$ 
is the so-called Kansa matrix which arises in un-symmetric collocation methods. 
 \subsection{Factorization in the conditionally positive definite setting}
 In this case, we have
 \begin{align*}
 \M_{X_N}:= \bigl( \opL \chi_{j}(x_k)\bigr)_{j,k} 
% =  
% [\opL\PhiB_{X_N} \vert \opL P] \left[ \begin{matrix} A\\  B\end{matrix}\right]
 = 
 [\K_{X_N} \vert \opL\mathbf{ P}] \left[ \begin{matrix} A\\  B\end{matrix}\right],
 \end{align*}
 where the Lagrange basis has an expansion of the form
 %
% \begin{equation*}
 $
\chi_{x_j} 
= 
\sum_{x_k\in \Upsilon(x_j)}^N A_{j,k}\varPhi(\cdot, x_k)
 +\sum_{\ell=0}^{\tilde{m}}\sum_{\mu=-\ell}^{\ell} B_{j,\ell,\mu} Y_{\ell}^\mu
 $,
with coefficients  determined by 
 \begin{equation}\label{lagrange_coeffs}
   \begin{bmatrix} \PhiB_{X_N} &  \mathbf{P} \\ \mathbf{P}^T &\mathbf{0}_{N_{\tilde{m}}\times N_{\tilde{m}}}\end{bmatrix} 
   \left[ \begin{matrix} A\\  B\end{matrix}\right] 
   = \begin{bmatrix} \mathbf{I}_{N\times N}\\ \mathbf{0}_{N_{\tilde{m}}\times N_{\tilde{m}}}\end{bmatrix}.
   \end{equation}
% \end{equation*}
As before, the FD matrix involves a Kansa-type matrix.

As we will see below, for a Helmholtz operator $\opL=\alpha-\Delta$ and $\alpha>0$,
$\K_{X_N} $ is an SBF collocation matrix. 
In this case, the auxiliary matrix $\opL \mathbf{P}$ is also a Vandermonde type matrix generated by spherical
harmonics, since $\opL$ is an isomorphism on $\Pi_{\tilde{m}}$. From a theoretical point of view, the choice of auxiliary matrix $\mathbf{P}$
is not important, but, as we will see, the stability analysis benefits from using an orthonormal basis for $\Pi_{\tilde{m}}$.

\subsection{Consistency}\label{SS:Consistency}
In order to assess the quality of the approximation, 
we first introduce the notion of \emph{consistency}, measured
by  
\begin{align}\label{consis}
\left\|\M_{X_N} (u\left|_{X_N}\right.) - (\opL u)\left|_{X_N}\right. \right\|_{\ell_p(X_N)}
=:
\rho_{u}(h)\to 0, \quad \text{as } h := \max_{X\in \Sph^2} \mathrm{dist} (x,X_N)\to 0.
\end{align}  
%We will analyze this quantity first.

The consistency condition is easily verified by using estimates for kernel interpolation on  
 $d$-dimensional spheres,
which are derived from zeros estimates.
 For many kernels, 
specifically those with native spaces which are Sobolev spaces $H^{m}(\mathbb{S}^d)$, 
interpolation error can be bounded by way of Sobolev error estimates as in
  \cite[Theorem 1.1]{NWW}.
In short,  we have
$\|u - I_{X_N} u\|_{C^{\m}(\Sph^d)} \le C h^{{m}-d/2- \m} \|u\|_{H^{m}(\Sph^d)}$
when $m>\m+d/2$.
This  easily provides a satisfying consistency estimate.
We note that 
$\|\M_{X_N} (u\left|_{X_N}\right.) - (\opL u)\left|_{X_N}\right. \|_{\ell_{\infty}(X_N)}
\le \|\opL u - \opL_{X_N} u\|_{\infty}$, so
\begin{equation}\label{consistency}
\|\opL u - \opL_{X_N} u\|_{\infty} = \| \opL (u - I_{X_N} u)\|_{\infty} \le \|u - I_{X_N} u\|_{C^{\m}}
\le C h^{{m}-1 - \m} \|u\|_{H^{m}(\Sph^2)}
\  \longrightarrow \ 
\rho_u(h) =\mathcal{O}( h^{m-1-\m})
\end{equation}
for  $u\in H^{m}(\Sph^2)$.
Because the sphere lacks a boundary, we may go a step further by invoking the ``doubling trick'' 
of Schaback: for $u\in H^{2m}(\Sph^d)$, the improved estimate
$\|u - I_{X_N} u\|_{C^{\m}(\Sph^d)} \le C h^{{2m}-d/2 - \m} \|u\|_{H^{2m}(\Sph^d)}$ holds.  
This gives
an improved consistency for for  $u\in H^{2m}(\Sph^2)$:
\begin{equation}\label{double_consistency}
\|\opL u - \opL_{X_N} u\|_{\infty} 
\le \|u - I_{X_N} u\|_{C^{\m}}
\le C h^{{2m}-1 - \m} \|u\|_{H^{2m}(\Sph^2)}
\  \longrightarrow \ 
\rho_u(h) =\mathcal{O}( h^{2m-1-\m})
\end{equation}

We note that these estimates are still  somewhat pessimistic;
there is substantial evidence that the penalty $-d/2$ 
in the approximation order is 
not necessary. 
This has been shown to hold for SBF approximation (not interpolation) in (\cite{MNPW,H-sphere}),
and for $L_p$ approximation of the interpolant in \cite{HNW-p}. 
In other words, the true $C^{\m}(\Sph^2)$ rate of approximation for
SBF interpolation
is likely $\mathcal{O}(h^{2m-\m})$ for sufficiently smooth  $u$ 
(although this has not been proven).

\section{Stability}\label{S:stability}
The \emph{stability} of the approximation is measured in terms of
\begin{align}\label{stab}
\left\|\M^{-1}_{X_N}\right\|_{p\to p} 
=
\sup_{\genfrac{}{}{0pt}{}{w \in \ell_p(X_N)}{w \neq 0}}
\frac{\left\|\M^{-1}_{X_N}w\right\|_{\ell_{p}(X_{h})}}{\left\|w\right\|_{\ell_{p}(X_{h})}}.
\end{align}
Note that  $\M^{-1}_{X_N} = 
\PhiB_{X_{N}} \K^{-1}_{X_{N}} \in \R^{N \times N}$ exists
if and only if 
$
\K_{X_{N}} 
$ 
is invertible,
which shows that the inverse of the Kansa matrix is needed.

\subsection{%Bounding $\|\M_{X_N}^{-1}\|_2$ 
Stability for Helmholtz operators and positive definite SBFs}\label{inverse_stability}

Recall that  if the SBF $\varPhi_m$ has Hilbert-Schmidt expansion as in
(\ref{HS})
with coefficients $\widehat{\phi}(\ell)>0$ and 
$\widehat{\phi}(\ell)\sim |\nu_{\ell}|^{-m}$,
then its native space is $\Nn = H^m(\mathbb{S}^d)$.
For a Helmholtz type operator of order $\m = 2$ having the form 
$\opL =\alpha-\Delta $, with $\alpha>0$,
 the kernel generating the Kansa matrix 
 $\varPsi(x,y) = \opL^{(1)} \varPhi(x,y)$ is also an SBF 
with 
Hilbert-Schmidt expansion 
$\varPsi(x,y) =  \sum_{\ell=0}^{\infty}\sum_{m=-\ell}^{\ell}\widehat{\psi}(\ell)Y_{\ell,m}(x) Y_{\ell,m}(y)$ 
having 
coefficients 
$\widehat{\psi}(\ell)
= \widehat{\phi}(\ell)(\lambda_{\ell} +\alpha)>0$.

Because $\widehat{\psi}(\ell) \sim |\nu_{\ell}|^{1-m}$, the native space for $\varPsi$ is
$\mathcal{H}_\varPsi  = H^{m-1}(\mathbb{S}^2)$. 
In short, the Kansa matrix is simply the interpolation matrix for the SBF $\varPsi$: i.e.,
$\K_{X_N} = \bigl(\opL^{(1)} \varPhi(x_j,x_k)\bigr)  
= \bigl(\varPsi(x_j,x_k)\bigr) =\PsiB_{X_N}$.
In particular, it is symmetric positive definite, and therefore invertible. 

Stability bounds for the matrix $\K_{X_N}$ are a consequence of the following lemma, which uses 
decay properties of the needlets developed in \cite{NPW,NPW2}. The precise decay property we 
use here is \cite[Theorem 2.2]{NPW2}.
\begin{lemma} Let $\tau\in C^{\infty}([0,\infty))$ satisfy $\tau(t) =1$ for $0\le t\le 1$, $0\le \tau\le 1$ and $\mathrm{supp}(\tau)\subset [0,2]$.
The family of zonal kernels $\{T_M: M\in \N\}$ given by 
$$T_M(x,y) :=   \frac{8\pi}{(M+1)^2}\sum_{\ell=0}^{\infty} \sum_{|\mu| \le \ell} {\tau}(\frac{\ell}{M}) Y_{\mu}^{\ell}(x)Y_{\mu}^{\ell}(y)$$
has the following property: there is a constant $\Gamma$ so that
 for any $X\subset \Sph^2$, with separation radius $q>0$,  if $M \ge \Gamma/q$ then
that the  collocation matrix  $\mathbf{T_M}_X:=\bigl(T_M(x_j,x_k)\bigr)_{x_j,x_k\in X}$ has minimal eigenvalue greater than 1.
\end{lemma}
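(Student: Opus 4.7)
The plan is to use a diagonal-dominance (Gershgorin) argument, combined with the precise decay of $T_M$ granted by the needlet estimate. First I would compute the diagonal entries. By the addition formula, $\sum_{|\mu|\le\ell} Y_\mu^\ell(x) Y_\mu^\ell(x) = (2\ell+1)/(4\pi)$, so
\begin{equation*}
T_M(x,x) = \frac{2}{(M+1)^2} \sum_{\ell=0}^{\infty} (2\ell+1) \tau(\ell/M).
\end{equation*}
Because $\tau\equiv 1$ on $[0,1]$ and $\mathrm{supp}(\tau)\subset[0,2]$, the summand is identically $2\ell+1$ for $\ell\le M$, so Riemann-sum comparison shows $T_M(x,x) \ge c_0$ for some constant $c_0 > 2$ and all $M$ large enough, uniformly in $x\in\Sph^2$.

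Next I would invoke the needlet decay estimate \cite[Theorem 2.2]{NPW2}, which guarantees, for every integer $k\ge 0$, a constant $c_k$ such that
\begin{equation*}
|T_M(x,y)| \le \frac{c_k}{(1+M\,\dist(x,y))^k}, \qquad x,y\in\Sph^2.
\end{equation*}
The Gershgorin bound then reads
\begin{equation*}
\lambda_{\min}(\mathbf{T_M}_X)
\ge
\min_{x_j\in X}\left[T_M(x_j,x_j) - \sum_{x_k\ne x_j}|T_M(x_j,x_k)|\right]
\ge c_0 - \max_{x_j\in X}\sum_{x_k\ne x_j}\frac{c_k}{(1+M\,\dist(x_j,x_k))^k}.
\end{equation*}

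The heart of the argument is controlling the off-diagonal sum using the separation $q$. Since the $x_k$ are $q$-separated, the open balls $B(x_k,q)$ are pairwise disjoint, and in each annulus $B(x_j,(i+1)q)\setminus B(x_j,iq)$ the area bound $\mu(\cdot)\lesssim iq^2$ combined with $\mu(B(\cdot,q))\gtrsim q^2$ shows that the number of $x_k$ in the annulus is bounded by $C i$ (independent of $q$). Splitting the sum into annuli and using $\dist(x_j,x_k)\ge iq$ on the $i$-th one gives, for $k\ge 4$,
\begin{equation*}
\sum_{x_k\ne x_j}\frac{c_k}{(1+M\,\dist(x_j,x_k))^k}
\le c_k \sum_{i=1}^{\infty} \frac{C i}{(1+Mqi)^k}
\le \frac{C'_k}{(Mq)^{k-2}}.
\end{equation*}
Choosing $k$ large (say $k=4$) and then $\Gamma$ large enough that $C'_k/\Gamma^{k-2} \le c_0 - 1$, we conclude that $\lambda_{\min}(\mathbf{T_M}_X) \ge 1$ whenever $M \ge \Gamma/q$, absorbing any requirement ``$M$ large'' into $\Gamma$ by monotonicity of the bounds.

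The main technical obstacle is verifying that the needlet decay from \cite{NPW2} applies verbatim to this $T_M$ with the stated rapid decay, and that the implicit constants $c_k$ do not depend on $M$; once that is in hand, the annulus-counting estimate (which relies only on quasi-uniformity of Lebesgue measure on $\Sph^2$) and the choice of $\Gamma$ are routine.
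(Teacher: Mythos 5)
Your argument is essentially the paper's proof: the same addition-formula computation showing $T_M(x,x)\ge 2$, the same needlet decay bound from \cite[Theorem 2.2]{NPW2}, and the same annulus-counting estimate of the off-diagonal row sums leading to Gershgorin-type diagonal dominance once $Mq$ exceeds a fixed constant $\Gamma$. The only cosmetic differences (using a general exponent $k$ instead of $k=3$, and asserting $c_0>2$, which is unnecessary since $T_M(x,x)\ge 2$ already suffices, with the strict inequality in $\lambda_{\min}>1$ obtained by making the off-diagonal sum strictly less than $1$) do not change the substance.
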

\begin{proof}
From \cite[Theorem 2.2]{NPW2}, we see that there is a constant $C_1$ so that
$$|T_M(x,y)| \le  \frac{C_1}{(1+M\dist(x,y))^3}$$ for all $x,y\in \Sph^2$.

Note that for $\ell\in \N$, the addition formula for spherical harmonics ensures that
$ \sum_{|\mu| \le \ell} |Y_{\mu}^{\ell}(x)|^2= \frac{2\ell+1}{4\pi}$, so $T_M(x,x)$ is constant in $x$,
and thus
$T_M(x,x)= \frac{8\pi}{(M+1)^2} \sum_{\ell=0}^{\infty} \tau(\frac{\ell}{M})  \frac{2\ell+1}{4\pi}\ge 2$
for every $x$.

For $x_j\in X$, we consider the sum of off-diagonal elements $\sum_{x_k\in X\setminus\{x_j\}} |T_M(x_j,x_k)|$. 
To this end, decompose $X=\bigcup_{n=1}^{\infty} E_n$, where $E_n =\{x_k\in X\mid nq\le \dist(x_k, x_j)\le (n+1)q\}$,
and note that $\#E_n\le 9 \pi^2n$. Thus 
$$\sum_{x_k\in X\setminus\{x_j\}}|T_M(x_j,x_k)|\le \sum_{n=1}^{\infty} \frac{C_1 9 \pi^2 n}{(1+Mnq)^3}
\le C_1 \frac{3\pi^4}{2}(Mq)^{-3}.
$$
Selecting $M$ so that $ C_1 \frac{3\pi^4}{2}(Mq)^{-3}<1$ ensures that for each $j$,
$$T_M(x_j,x_j) - \sum_{x_k\in X\setminus\{x_j\}}|T_M(x_j,x_k)|> \frac12 T_M(x_j,x_j)$$
and the lemma follows.
\end{proof}
The following lemma gives a lower bound for the numerical range of a conditionally positive definite collocation matrix (on the
subspace of admissible coefficients). Note that, by Remark \ref{SPD_is_CPD}, it applies also to (strictly) positive definite SBFs
with $\m=-1$.
\begin{lemma}\label{lower_numerical_range}
For a conditionally positive definite SBF $\varPhi$ of order $\tilde{m}$, having expansion (\ref{HS}) 
with coefficients $\widehat{\phi(\ell)}$ which obey the bounds 
$\gamma_1 |\nu_{\ell}|^{-m} \le \widehat{\phi(\ell)}$ % \le \gamma_2|\nu_\ell|^{-m}$
for all $\ell>\tilde{m}$,
there is a constant $C$ so that  if $X\subset \Sph^2$ has separation distance $q$, then 
the $\lambda_{\min} = \min\left\{\frac{ c^T\PhiB_X c}{c^Tc}\mid c\in \R^N,  (\forall P\in \Pi_{\tilde{m}})\, \sum c_{x_j} P(x_j) = 0\right\}$
satisfies $\lambda_{\min} \ge C q^{2m-2}$.
\end{lemma}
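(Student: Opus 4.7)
The plan is to combine the Hilbert-Schmidt expansion of $\varPhi$ with the bandlimited needlet kernel $T_M$ introduced in the previous lemma, exploiting the moment conditions to cancel the low-frequency obstruction.

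First I would write out, for any admissible coefficient vector $c\in \R^N$ satisfying $\sum_j c_j P(x_j)=0$ for all $P\in \Pi_{\tilde{m}}$, the quadratic form in Fourier variables. Using (\ref{HS}) and setting $\hat{c}(\ell,\mu) := \sum_j c_j Y_\ell^\mu(x_j)$, one gets
$$c^T\PhiB_X c \;=\; \sum_{\ell=0}^{\infty}\widehat{\phi}(\ell)\sum_{|\mu|\le\ell}|\hat{c}(\ell,\mu)|^2 \;=\; \sum_{\ell>\tilde{m}}\widehat{\phi}(\ell)\sum_{|\mu|\le\ell}|\hat{c}(\ell,\mu)|^2,$$
where the terms with $\ell\le\tilde{m}$ drop out because $\hat{c}(\ell,\mu)=0$ for those indices by the moment conditions.

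Next I would compare with the bandlimited kernel. An identical computation gives
$$c^T\mathbf{T}_{M,X}c \;=\; \frac{8\pi}{(M+1)^2}\sum_{\ell>\tilde{m}}\tau(\ell/M)\sum_{|\mu|\le\ell}|\hat{c}(\ell,\mu)|^2,$$
again using the moment conditions to suppress $\ell\le\tilde{m}$. Since $\mathrm{supp}\,\tau\subset[0,2]$, only frequencies $\ell\le 2M$ contribute, and on this range the hypothesis $\widehat{\phi}(\ell)\ge\gamma_1|\nu_\ell|^{-m}$ together with $|\nu_\ell|\le 2M(2M+1)$ yields a lower bound $\widehat{\phi}(\ell)\ge C_2 M^{-2m}$ with $C_2$ depending only on $\gamma_1$ and $m$. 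Combining the two displays,
$$c^T\PhiB_X c \;\ge\; C_2 M^{-2m}\sum_{\ell>\tilde{m}}\tau(\ell/M)\sum_{|\mu|\le\ell}|\hat{c}(\ell,\mu)|^2 \;=\; \frac{C_2}{8\pi}M^{-2m}(M+1)^2\, c^T\mathbf{T}_{M,X}c.$$

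Finally I would choose $M=\lceil \Gamma/q\rceil$ (where $\Gamma$ is the constant from the preceding lemma), which is legitimate once $q$ is small enough that $M>\tilde{m}$. The preceding lemma then gives $c^T\mathbf{T}_{M,X}c\ge\|c\|_2^2$, so
$$c^T\PhiB_X c \;\ge\; \frac{C_2}{8\pi}\,M^{2-2m}\,\|c\|_2^2 \;\ge\; C\,q^{2m-2}\|c\|_2^2,$$
which is the claimed bound. The main obstacle is the bookkeeping around the two roles of the moment conditions: one must verify that they simultaneously remove the $\ell\le\tilde{m}$ contributions from \emph{both} $\PhiB_X$ and $\mathbf{T}_{M,X}$, and that $M\sim 1/q$ is eventually large enough (as $q\to 0$) to exceed $\tilde{m}$, so the needlet lemma can be invoked without further restriction on the point set beyond its separation distance.
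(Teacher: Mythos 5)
Your proof is correct and follows essentially the same route as the paper's: dominating the quadratic form from below by the scaled bandlimited kernel $\beta_M\frac{(M+1)^2}{8\pi}T_M$ with $\beta_M\gtrsim M^{-2m}$ on the band $\ell\le 2M$, using the moment conditions to discard the modes $\ell\le\tilde m$, and invoking the needlet lemma with $M\sim 1/q$. Writing the comparison explicitly in Fourier variables, as you do, is just an unpacked version of the paper's observation that $\varPhi-S_M$ is conditionally positive definite of order $\tilde m$, so no further changes are needed.
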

\begin{proof}
Select $M>\min(\Gamma q,\tilde{m})$, and let $\beta_M:=\min_{\ell\le 2M} \widehat{\phi}(\ell)$. Since 
$|\nu_{\ell}| =\ell(\ell+1)$,
there is a constant
$C$ so that $\beta_M\ge C M^{-2m}$. 
Define $$S_M(x,y) :=
 \beta_M \sum_{\ell=0}^{\infty} \sum_{|\mu| \le \ell} {\tau}(\frac{\ell}{M}) Y_{\mu}^{\ell}(x)Y_{\mu}^{\ell}(y)
=\beta_M\frac{(M+1)^2}{8\pi} T_M(x,y).$$
When $\ell>\tilde{m}$,  the Fourier coefficients of $S_M$
are controlled by those of $\varPhi$:
$\widehat{s}(\ell) = \beta_M {\tau}(\frac{\ell}{M})\le \widehat{\phi}(\ell)$. It follows that
the collocation matrix 
$\bigl(\varPhi(x_j,x_k) - S_M(x_j,x_k)\bigr)_{x_j,x_k\in X}$
is conditionally positive definite of order $\tilde{m}$, and thus,
$\lambda_{\min}(\PhiB_X) \ge \lambda_{\min}(\mathbf{S_M}_X) \ge C M^2 \beta_M\ge C q^{2m-2}$.
\end{proof}
From this lemma, it follows that
$$\|\K_{X_N}^{-1}\|_{2\to 2}   = 
\| \PsiB_{X_N}^{-1}\|_{2\to 2} \le C q^{2 - 2(m - 1)}
= 
C q^{4 - 2m}
.$$
At the same time, the  interpolation matrix $\PhiB_{X_N}$ for the original kernel has 
$\ell_2\to \ell_2$ norm
$$ \| \PhiB_{X_N}\|_{2\to 2} \le N \|\varPhi\|_{\infty}  \le C q^{-2 }.$$
So in this case, 
\begin{equation}\label{full_inverse_stability}
\|\M_{X_{N}}^{-1}  \|_{2\to 2}
\le 
\|\K_{X_N}^{-1} \|_{2\to 2}   
\|\PhiB_{X_N} \|_{2\to 2}
\le C q^{2-2m}.
\end{equation}

This example can be modified as desired
to consider (not necessarily differential) operators 
which commute with $\Delta$ and therefore are diagonalized
by $(Y_{\ell}^{\mu})$.
In that case,
 $\|\M_{X_{N}}^{-1}  \|_{2\to 2}
\le 
 C q^{\m-2m}$, where $\m$ is rate of growth of the symbol of the operator.
\subsection{Stability in the conditionally positive definite case}
To prove stability, we will need to account for the smallest eigenvalue of $\K$, but also the contributions of $\opL \mathbf{P}$ and $\mathbf{P}$.
For this, we estimate on the stability of a basis for $\mathbf{P}$.

This  makes use of the 
Marcienkiewicz-Zygmund inequality  \cite[Theorem 4.2]{NPW},
which
states that there is a constant $\delta>0$ so that  for any $S\in \Pi_{L}$,
the estimate  $(1-\delta)\|S\|_1\le \sum_{\xi\in\Xi} |S(\xi)| w_{\xi} \le (1+\delta) \|S\|_1$
holds
with the weights $w_{\xi} = \mu(R_{\xi})$, obtained from the  Voronoi cell $R_{\xi}$. 
Of note, we have $B(\xi,q)\subset V_{\xi}\subset B(\xi,h)$, so there are constants $0<\tilde{c}_1\le \tilde{c}_2<\infty$ so that
$\tilde{c}_1q^2\le w_{\xi} \le \tilde{c_2} h^2$. 
A more-or-less direct application of this result  with $L=2\m$ guarantees $0<c_1\le c_2<\infty$ so that
for all $P \in \Pi_{\tilde{m}}$, 
\begin{equation}
\label{MZ}
c_1q^2 \sum_{\xi\in\Xi }|P(\xi)|^2 \le \int_{\Sph^2} |P(x)|^2 \dif x \le c_2 h^2 \sum_{\xi\in \Xi} |P(\xi)|^2 
\end{equation}
holds (since $S=|P|^2$ is a spherical harmonic of order $L=2\tilde{m}$).

\begin{lemma} \label{GRAM}
For any degree $\tilde{m} \in \N$, 
there exist constants $0<C_1\le C_2<\infty$ so that for any 
$L_2(\Sph^2)$-orthogonal  basis $\{Q_j\mid j\le N_{\tilde{m}}\}$ for $\Pi_{\tilde{m}}$ 
and any
finite set $X\subset \Sph^2$ with separation distance $q$ and fill distance $h$, the corresponding discrete Gram matrix
 $\mathbf{G}_X= (\mathbf{Q}^*\mathbf{Q}) = \left( \sum_{x_k\in X} Q_j(x_k) \overline{Q_{\ell}(x_k)}\right)_{j,\ell}$
 has spectrum which satisfies
 $$\sigma(\mathbf{G}_X) \subset \bigl[C_1 h^{-2} \min\|Q_j\|_2^2, C_2 q^{-2} \max\|Q_j\|_2^2\bigr].$$
\end{lemma}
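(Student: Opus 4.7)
The plan is to recognize that $\mathbf{G}_X$ is the Gram matrix of the functions $\{Q_j\}$ with respect to the \emph{discrete} inner product $\langle f,g\rangle_X := \sum_{x_k\in X} f(x_k)\overline{g(x_k)}$, and then compare this discrete inner product to the continuous $L_2$ inner product (with respect to which $\{Q_j\}$ is already orthogonal) using the Marcinkiewicz--Zygmund inequality (\ref{MZ}).

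Concretely, for any $c\in \C^{N_{\tilde{m}}}$, I would set $P := \sum_{j=1}^{N_{\tilde{m}}} c_j Q_j \in \Pi_{\tilde{m}}$ and compute
\[
\langle \mathbf{G}_X c, c\rangle
= \sum_{j,\ell} \overline{c_j} c_\ell \sum_{x_k\in X} Q_j(x_k)\overline{Q_\ell(x_k)}
= \sum_{x_k\in X} |P(x_k)|^2.
\]
Since $|P|^2$ is a spherical harmonic expansion of order at most $L = 2\tilde{m}$, the Marcinkiewicz--Zygmund inequality (\ref{MZ}) applies and yields
\[
\frac{1}{c_2 h^2}\|P\|_{L_2(\Sph^2)}^2
\le \langle \mathbf{G}_X c, c\rangle
\le \frac{1}{c_1 q^2}\|P\|_{L_2(\Sph^2)}^2 .
\]
Now I would use the $L_2$-orthogonality of $\{Q_j\}$ to write $\|P\|_{L_2}^2 = \sum_j |c_j|^2 \|Q_j\|_{L_2}^2$, which is sandwiched between $(\min_j \|Q_j\|_{L_2}^2)\|c\|_2^2$ and $(\max_j \|Q_j\|_{L_2}^2)\|c\|_2^2$. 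Chaining these inequalities gives
\[
\frac{\min_j\|Q_j\|_{L_2}^2}{c_2}\, h^{-2}\|c\|_2^2
\;\le\; \langle \mathbf{G}_X c, c\rangle
\;\le\; \frac{\max_j\|Q_j\|_{L_2}^2}{c_1}\, q^{-2}\|c\|_2^2,
\]
from which the claimed spectral bounds follow with $C_1 = 1/c_2$ and $C_2 = 1/c_1$, both independent of $X$ and of the particular orthogonal basis.

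There is no substantial obstacle: the only ingredient beyond elementary linear algebra is the MZ inequality, which has already been recorded in (\ref{MZ}) with the correct polynomial degree $L = 2\tilde{m}$ precisely to handle $|P|^2$ for $P \in \Pi_{\tilde{m}}$. The mild point to double-check is that the constants $c_1, c_2$ depend only on $\tilde{m}$ (and on the quasi-uniformity constant $\rho_0$ implicit in the Voronoi weight bounds $\tilde{c}_1 q^2 \le w_\xi \le \tilde{c}_2 h^2$), so that $C_1, C_2$ genuinely do not depend on the specific orthogonal basis chosen — only on $\tilde{m}$.
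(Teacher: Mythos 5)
Your proof is correct and follows essentially the same route as the paper: evaluate the quadratic form $\langle \mathbf{G}_X c,c\rangle=\sum_{x_k\in X}|P(x_k)|^2$ for $P=\sum_j c_jQ_j$, apply the Marcinkiewicz--Zygmund inequality (\ref{MZ}) to pass to $\|P\|_{L_2}^2$, and use the $L_2$-orthogonality of the $Q_j$ to bound $\|P\|_{L_2}^2$ between $\min_j\|Q_j\|_2^2\,\|c\|_2^2$ and $\max_j\|Q_j\|_2^2\,\|c\|_2^2$. Your remark that the constants depend only on $\tilde{m}$ and the quasi-uniformity parameters, not on the chosen orthogonal basis, is also consistent with the paper's statement.
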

\begin{proof}

 By orthogonality, the continuous Gram matrix $\int_{\Sph^2} Q_j(x) \overline{Q_{\ell}(x)}\dif x$ is diagonal.  
 The eigenvalues for the discrete Gram matrix $\mathbf{G}_X=(G_{j,\ell})_{j,\ell}$
can be determined from its numerical range. We consider  the induced quadratic form:
$$(a_j)_{j\le N_{\tilde{m}}} \mapsto 
\sum_{j,\ell=1}^{N_{\tilde{m}}}
 \overline{a_{\ell} }a_jG_{j,\ell}
= \sum_{x_k\in X} \left|\sum_{j=1}^{N_{\tilde{m}}} a_j Q_j(x_k)\right|^2$$

Using (\ref{MZ}), we have
$$
\sum_{x_k\in X} \left|\sum_{j=1}^{N_{\tilde{m}}} a_j Q_j(\xi)\right|^2 
\le 
\frac1{c_1}q^{-2} 
 \int_{\Sph^2} 
 \left|
    \sum_{j=1}^{N_{\tilde{m}}} a_j Q_j(x)
    \right|^2 
 \dif x = 
q^{-2}\frac{\left( \max \|Q_j\|_2^2\right)}{c_1} \sum_{j=1}^{N_{\tilde{m}}} |a_j|^2 $$
Similarly,
$$
\sum_{x_k\in X} \left|\sum_{j=1}^{N_{\tilde{m}}} a_j Q_j(\xi)\right|^2 
 \ge 
\frac1{c_2}h^{-2}  \int_{\Sph^2} \left|\sum_{j=1}^{N_{\tilde{m}}} a_j Q_j(x)\right|^2 \dif x = 
h^{-2}\frac{\left( \min \|Q_j\|_2^2\right)}{c_2}
\sum_{j=1}^{N_{\tilde{m}}} |a_j|^2, $$
and the lemma follows.
\end{proof}
 
\begin{proposition}\label{tps_stability}
	 Let $\opL$  be a Helmholtz type operator of order $\m = 2$ having the form $\opL =\alpha-\Delta $, with $\alpha>0$ 
	 and let $\varPhi_m$ be a conditionally positive definite SBF with expansion (\ref{HS}) and 
	 $\widehat{\phi}_{\ell} \sim \ell^{-2m} $ for all $\ell>\tilde{m}$.
	 Then $ \M_{X_N}$ is invertible and has stability bound $\|\M_{X_N}\|_{2\to 2}\le C q^{2-2m}$,
	 with $C$ a constant which depends on $\varPhi_m$
	 and mesh ratio $\rho$.
\end{proposition}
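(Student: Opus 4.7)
My approach is to mirror the factorization $\M_{X_N}^{-1}=\PhiB_{X_N}\K_{X_N}^{-1}$ used in the strictly positive definite case, replacing $\K_{X_N}$ by an augmented saddle-point matrix that remains invertible even when $\varPhi$ and $\varPsi(x,y):=\opL^{(1)}\varPhi(x,y)$ are only conditionally positive definite. Given $w\in\R^N$, consider the system
\begin{equation*}
\tilde{\K}_{X_N}\begin{bmatrix}A\\ B\end{bmatrix}=\begin{bmatrix}w\\0\end{bmatrix},\qquad \tilde{\K}_{X_N}:=\begin{bmatrix}\K_{X_N} & \opL\mathbf{P}\\ \mathbf{P}^T & 0\end{bmatrix}.
\end{equation*}
If this is solvable, then $v:=\PhiB_{X_N}A+\mathbf{P}B$ satisfies $\M_{X_N}v=w$: the function $s=\sum_k A_k\varPhi(\cdot,x_k)+\sum B_{\ell,\mu}Y_{\ell}^\mu$ lies in $S(X_N,\tilde m)$ (since $\mathbf{P}^T A=0$), has $s|_{X_N}=v$, and $(\opL s)|_{X_N}=\K_{X_N}A+\opL\mathbf{P}B=w$. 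Once $\tilde{\K}_{X_N}$ is invertible, this yields $\M_{X_N}^{-1}=[\PhiB_{X_N}\mid\mathbf{P}]\,\tilde{\K}_{X_N}^{-1}\begin{bmatrix}I\\0\end{bmatrix}$.

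For the Helmholtz operator and the orthonormal spherical-harmonic basis, $\opL\mathbf{P}=\mathbf{P}\mathbf{D}$, where $\mathbf{D}$ is the diagonal matrix with entries $\alpha+\ell(\ell+1)$, $\ell\le\tilde m$; the norms of $\mathbf{D}$ and $\mathbf{D}^{-1}$ depend only on $\alpha$ and $\tilde m$. Conjugation by $\mathrm{diag}(I,\mathbf{D}^{1/2})$ transforms $\tilde{\K}_{X_N}$ into the symmetric augmented interpolation matrix of $\varPsi$. Since the Fourier coefficients satisfy $\widehat\psi(\ell)=(\alpha+\ell(\ell+1))\widehat\phi(\ell)\sim \ell^{-2(m-1)}$ for $\ell>\tilde m$, the kernel $\varPsi$ is conditionally positive definite of order $\tilde m$, so the symmetric saddle-point matrix, and therefore $\tilde{\K}_{X_N}$, is nonsingular.

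For the quantitative bound, set $\begin{bmatrix} x\\ y\end{bmatrix}=\tilde{\K}_{X_N}^{-1}\begin{bmatrix}w\\0\end{bmatrix}$, so $\mathbf{P}^T x=0$ and $\K_{X_N}x+\opL\mathbf{P}y=w$. Taking the inner product with $x$ and using $\opL\mathbf{P}y=\mathbf{P}\mathbf{D}y\in\mathrm{range}(\mathbf{P})$ gives $\langle \K_{X_N}x,x\rangle=\langle w,x\rangle$; Lemma~\ref{lower_numerical_range} applied to $\varPsi$ with order parameter $m-1$ yields $\langle \K_{X_N}x,x\rangle\ge Cq^{2m-4}\|x\|^2$, hence $\|x\|\le Cq^{4-2m}\|w\|$. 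Next, $y$ is recovered from $\mathbf{P}\mathbf{D}y=w-\K_{X_N}x$ via its normal equations; using $\sigma_{\min}(\mathbf{P})\ge Ch^{-1}$ from Lemma~\ref{GRAM}, the boundedness of $\mathbf{D}^{-1}$, the trivial bound $\|\K_{X_N}\|\le Cq^{-2}$, and quasi-uniformity $h\le\rho_0 q$, one obtains $\|y\|\le Cq^{3-2m}\|w\|$. Finally, combining with $\|\PhiB_{X_N}\|\le Cq^{-2}$ and $\|\mathbf{P}\|\le Cq^{-1}$ (Lemma~\ref{GRAM}),
\begin{equation*}
\|\M_{X_N}^{-1}w\|=\|\PhiB_{X_N}x+\mathbf{P}y\|\le Cq^{-2}\cdot Cq^{4-2m}\|w\|+Cq^{-1}\cdot Cq^{3-2m}\|w\|=Cq^{2-2m}\|w\|.
\end{equation*}

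The main technical obstacle is the lower eigenvalue bound $\lambda_{\min}(\K_{X_N}|_{\ker\mathbf{P}^T})\ge Cq^{2m-4}$: this requires verifying that $\varPsi$ inherits the conditional positive definiteness of $\varPhi$ (not automatic for an arbitrary $\opL$) and applying Lemma~\ref{lower_numerical_range} with the correctly reduced exponent $m-1$. The balancing of $\|\PhiB_{X_N}\|\cdot\|x\|$ against $\|\mathbf{P}\|\cdot\|y\|$ in the last display is exactly where quasi-uniformity is essential, since the two contributions only combine to the sharp rate $q^{2-2m}$ because $h$ and $q$ are comparable.
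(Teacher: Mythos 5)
Your proposal is correct and follows essentially the same route as the paper: both pass through the augmented saddle-point systems, apply Lemma~\ref{lower_numerical_range} to $\varPsi=\opL^{(1)}\varPhi$ with the reduced exponent $m-1$ to obtain the kernel-coefficient bound $\|x\|\le Cq^{4-2m}\|w\|$, use Lemma~\ref{GRAM} for the spherical-harmonic block to obtain $\|y\|\le Cq^{3-2m}\|w\|$, and then combine with $\|\PhiB_{X_N}\|_{2\to 2}\le Cq^{-2}$ and $\|\mathbf{P}\|_{2\to 2}\le Cq^{-1}$ to reach $Cq^{2-2m}$. The only deviations are minor and self-contained: you prove invertibility by exhibiting a preimage (surjectivity) via the nonsingular augmented Kansa matrix rather than the paper's injectivity argument, you use the constraint row $\mathbf{P}^T$ in place of the equivalent $(\opL\mathbf{P})^T$ (legitimate since $\opL\mathbf{P}=\mathbf{P}\mathbf{D}$ with $\mathbf{D}$ invertible diagonal), and you derive the two coefficient estimates directly (the inner-product trick and the normal equations) where the paper instead cites \cite[Proposition 5.2]{FHNWW}.
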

\begin{proof}
For the purposes of the proof, we let $(P_j)_{j\le N_{\tilde{m}}}$ be the standard (real) orthonormal spherical harmonic basis $(Y_{\ell}^{\mu})_{\ell\le \tilde{m},|\mu|\le \ell}$ given in (\ref{spherical_harmonic}).

As  in the strictly positive definite example above, we have
\begin{align*}
	\psi(x\cdot y) =\opL^{(1)} \phi(x\cdot y) 
	=\sum_{\ell=0}^{\infty} \overbrace{(\alpha+\lambda_{\ell})\widehat{\phi}_{\ell} }^{\widehat{\psi}_{\ell} } 
	\sum_{\mu=-\ell}^{\ell}
	Y^{\mu}_{\ell}(x)Y^{\mu}_{\ell}(y), 
\end{align*}
with  $\widehat{\psi}_{\ell} \sim \ell^{-2m}+ \ell^{-2m+2} \sim \ell^{-2(m-1)}$ for $ \ell > \tilde{m}$.
Hence, $\varPsi=\opL \varPhi$ is  conditionally positive definite of order $\tilde{m}$. %=m-1$.

Now, consider $z \in \R^{N}$ such that $ \M_{X_N} z=0$. Then,
$
	 [\K_{X_N} \vert \opL\mathbf{ P}]  [ A z \vert Bz ]^T=0.
$
Because  spherical harmonics of degree $\tilde{m}$ are invariant under $\opL$, and $A$ annihilates such spherical harmonics,
 we have $(\opL\mathbf{ P})^T A =\mathbf{0}_{N_{\tilde{m}}\times N}$.
It follows that
$$
\begin{pmatrix} 0 \\ 0 \end{pmatrix}= \begin{pmatrix} \K_{X_N}& \mathbf{ \opL P} \\ (\mathbf{ \opL P})^T & \mathbf{0} \end{pmatrix}   \begin{pmatrix} A z\\ B z\end{pmatrix}.
$$
%as the last equation is automatically satisfied.  
By applying $(Az)^T$ to the first equation,
we conclude that
 $(Az)^T\K _{X_N}Az +(Az)^T(\opL \mathbf{ P} )Bz = 0$, which implies $(Az)^T\K _{X_N}Az =0$.
 %, since $(\opL \mathbf{ P})^T Az=0$.
 Because   $\varPsi$ is conditionally positive definite of degree $\tilde{m}$, this
 implies 
$ A z=0$, which further implies that $ B z=0$.
Now, we use (\ref{lagrange_coeffs}), namely 
$z = \Id z = [\PhiB_{X_N} \vert  \mathbf{ P}] \left[ \begin{matrix} A\ B\end{matrix}\right] ^T z=0$,
to obtain $z=0$.
Hence,  $\M_{X_{N}}$
is injective and therefore invertible.

Given $z\in \R^{N}$, consider $y = \M_{X_N}^{-1} z$. We write 
 $$\begin{pmatrix} z\\0\end{pmatrix} =\begin{pmatrix}\K_{X_N}& \opL \mathbf{ P} \\ (\opL \mathbf{ P})^T & \mathbf{0}\end{pmatrix} \begin{pmatrix} {a}\\ {b}\end{pmatrix}
 \quad
 \text{and}
 \quad
 \begin{pmatrix}\PhiB_{X_N}&  \mathbf{ P} \\ \mathbf{ P}^T & \mathbf{0}\end{pmatrix} \begin{pmatrix} {a}\\ {b}\end{pmatrix} =\begin{pmatrix} {y}\\ 0\end{pmatrix}.$$ 
 To control $\|a\|_{\ell_2}$ and $\|b\|_{\ell_2}$ by $\|z\|_{\ell_2}$, we employ 
  \cite[Proposition 5.2]{FHNWW}, with $\vartheta =\lambda_{\min} \ge C q^{2m-4}$  determined by Lemma \ref{lower_numerical_range}.
A direct application of this result gives 
\begin{equation}\label{a-coeff}
\|a\|_{\ell_2} \le C q^{4-2m}\|z\|_{\ell_2(N)}.
\end{equation}
 With  Gram matrix
$\widetilde{\mathbf{G}}_{X_N} = (\opL \mathbf{ P})^T  \opL \mathbf{ P}=\bigl(\sum_{x_j\in X_N}\opL P_j (x_k) \opL P_{\ell} (x_j)\bigr)_{j,\ell}$,
the second estimate in \cite[Proposition 5.2]{FHNWW} ensures 
that 
 $$\|b\|_{\ell_2}\le C \left\|\bigl( \widetilde{\mathbf{G}}_{X_N}  \bigl)^{-1}\right\|_{2\to 2}^{1/2}  q^{2-2(m-1)} N \|z\|_{\ell_2(N)}
 \le 
 C \left( \frac{h}{c\sqrt{C_1}}\right)  q^{2-2(m-1)} N \|z\|_{\ell_2(N)}
 .$$ 
 The final inequality uses Lemma \ref{GRAM} applied to the $L_2$ orthogonal basis
   $(\opL P_j) =\bigl( (\alpha+\ell(\ell+1))Y_{\ell}^{\mu}\bigr)$, which shows that
 the discrete Gram matrix 
  has spectrum 
 $$\sigma\bigl(\widetilde{\mathbf{G}}_{X_N}\bigr)
 \subset
 [C_1 h^{-2} \alpha^2 ,C_2 q^{-2}  (\alpha+m(m-1))^2].$$
 It follows from quasi-uniformity and the fact that $N\le Cq^{-2}$, that 
 \begin{equation}\label{b-coeff}
 \|b\|_{\ell_2}\le C q^{3-2m}\|z\|_{\ell_2(N)}.
\end{equation} 
 
On the other hand,
a direct calculation shows that
$\|y\|_2 \le 
\|\PhiB_{X_N}\|_{2\to 2} \|a\|_{\ell_2(N)} +\| \mathbf{ P}^T \mathbf{ P}\|_{2\to 2}^{1/2} \| b\|_{\ell_2(N_{\tilde{m}})}$.
As in the positive definite case, $\|\PhiB_{X_N}\|_{2\to 2}\le N \|\varPhi\|_{\infty} \le C q^{-2}$,
so by (\ref{a-coeff}), $ \|\PhiB_{X_N}\|_{2\to 2} \|a\|_{\ell_2(N)} \le C q^{2-2m}\|z\|_{\ell_2}$.

Using Lemma \ref{GRAM}, this time with the orthonormal basis $(P_j) = (Y_{\ell}^{\mu})$, we can estimate
the spectrum of the discrete Gram matrix 
$\mathbf{G}_{X_N} = \bigl(\sum_{x_j\in X_N} P_j (x_k)  P_{\ell} (x_j)\bigr)_{j,\ell}$ with
$$\sigma\bigl(\mathbf{G}_{X_N}\bigr)
 \subset
 [C_1 h^{-2} ,C_2 q^{-2} ].$$
Thus, by (\ref{b-coeff}), we have
$\| \mathbf{ P}^T \mathbf{ P}\|_{2\to 2}^{1/2}\| b\|_{\ell_2(N_{\tilde{m}})}
\le 
\left(\sqrt{C_2} q^{-1}\right) Cq^{3-2m}  \|z\|_{\ell_2(N)} 
\le
C q^{2-2m}$.
Therefore, it follows that
$\|y\|_2\le C q^{2-2m}   \|z\|_{2}$
which implies
$\|\M_{X_N}^{-1}\|_{2\to 2} \le C q^{2-2m}$ as desired.
 \end{proof}

\section{Localization or restricted thin plate spline FD matrices}\label{S:Localization}

When using the full RBF-FD matrix, a basic problem is the construction of $\M_{X_N}$ -- 
this is roughly equivalent to solving a large interpolation problem (solving an $N\times N$ system), 
followed by  a large  ($N\times N$) matrix multiplication. 
To this end, it is desirable to consider the problem of solving a number of small systems 
-- in other words, we consider instead using one  
``stencil'' for each point in $\Xi$, so that each stencil uses at most $M\ll N$ nearby points.

In order to have nice theoretical bounds from \cite{HNW-p}, we will restrict the localization to restricted thin-plate splines
described in section \ref{SSS:RTPS}.

\subsection{Local Lagrange functions and local stencils}
We now present an alternative local stencil FD method which permits some 
theoretical error estimates.
In \cite{HNW-p} it is shown 
that the Lagrange functions
\begin{equation}\label{full_lagrange}
\chi_{x_j} = \sum_{k=1}^N A_{j,k}\varPhi(\cdot, x_k)
 +\sum_{\ell=0}^{\tilde{m}}\sum_{\mu=-\ell}^{\ell} B_{j,\ell,\mu} Y_{\ell}^\mu
 \end{equation}
  in the restricted thin-plate spline setting enjoy a number of 
  analytic properties. Of interest particular interest here is that 
  the  coefficients determined by (\ref{lagrange_coeffs}), satisfy
\begin{equation}\label{lagrange_props}
|A_{j,k} |\le C h^{2-2m} \exp{\left(-\nu
    \frac{\dist(x_j,x_k)}{h}\right)}
%    \qquad\text{and}\qquad
%|\chi_{x_j}(x)| \le C
%  \exp\left(-2\nu \frac{\dist(x,x_j)}{h}\right)
.
\end{equation}
with  $\nu>0$; 
%in each estimate, 
the constant $C$ depends on the mesh ratio $\rho = h/q$ for $X_N$, 
while $\nu$ depends only on the kernel.

\subsubsection{Local Lagrange functions}
In \cite{FHNWW}, it is shown that the space $S(X_N)=S(X_N,m-1)$ possesses an easily computed
stable basis $(b_j)_{j=1}^N$
consisting of ``local Lagrange functions''. 
 Since then, this construction 
(along with its  desirable properties) has been demonstrated for
kernels on other domains \cite{HNRW2} 
and on bounded regions in $\R^d$ (\cite{HNRW1}).

The coefficients of the basis function
$b_{j}\in 
S(X_N)
%S(X_N,m-1)
$ are obtained by solving  a relatively (with respect to $N$) 
smaller system: 
namely, take 
$\Upsilon(x_j):= \{x_k\in X_N\mid \mathrm{dist}(x_j,x_k) \le K h|\log h|\},$
where $K$ is fixed constant depending on $m$; then there are functions of the form
\begin{equation}\label{local_lagrange}
b_j = \sum_{x_k\in \Upsilon(x_j)} \check A_{j,k}\varPhi(\cdot, x_k)
 +\sum_{\ell=0}^{\tilde{m}}\sum_{\mu=-\ell}^{\ell} \check B_{j,\ell,\mu} Y_{\ell}^\mu
 \end{equation}
 satisfying for all  $x_k\in \Upsilon(x_j)$,  $b_j(x_k) = \delta_{j,k}$. 
 Clearly 
 $b_j\in 
 S(\Upsilon(x_j))
 %S(\Upsilon(x_j),m-1)
 $, and 
 the coefficients $\check A_{j,k}, \check B_{j,\ell,\mu}$ are obtained by solving a linear system
 of size $(\#\Upsilon(x_j) + N_{m-1})$. 
 Because 
 $
 S(\Upsilon(x_j)),
 % S(\Upsilon(x_j),m-1)
 \subset 
 S(X_N)
% S(X_N,m-1)
 $, we may consider $b_j$ as having coefficients
 $(\check A_{j,k})_{k=1}^N$, with $\check A_{j,k}=0$ when 
 $x_k\notin \Upsilon(x_j)$; i.e., we extend by zero
 from $\Upsilon(x_j)$ to $X_N$.
 
 The local Lagrange coefficients $\check A_{j,k},  \check B_{j,\ell,\mu}$ of $b_j$ 
 differ from the coefficients $A_{j,k},  B_{j,\ell,\mu}$ of $\chi_{x_j}\in 
  S(X_N)
% S(X_N,m-1)
 $ 
 given in (\ref{full_lagrange}) very slightly, as the following lemma shows.
 
\begin{lemma}\label{coeff_distance}
For 
$\chi_{x_j}\in  
S(X_N)
%S(X_N,m-1)
$ given in (\ref{full_lagrange}) and 
$b_j\in 
S(\Upsilon(x_j))
%S(\Upsilon(x_j),m-1)
\subset 
S(X_N)
%S(X_N,m-1)
$
as in (\ref{local_lagrange}),
the coefficients satisfy the bounds
\begin{equation}\label{coeff_dist_inequality}
\sum_{k=1}^N | A_{j,x_k}-\check A_{j,x_k}|\le C h^{K\nu-\kappa_m}, 
\qquad
\qquad
\sum_{\ell=0}^{\tilde{m}}\sum_{\mu=-\ell}^{\ell}|B_{j,\ell,\mu}- \check B_{j,\ell,\mu}| \le Ch^{K\nu-\kappa_m}
\end{equation}
\end{lemma}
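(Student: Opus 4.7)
The plan is to expand the difference $\chi_{x_j}-b_j$ in the full Lagrange basis of $S(X_N)=S(X_N,m-1)$, then bound each resulting term via the coefficient decay (\ref{lagrange_props}) combined with pointwise decay of the local Lagrange $b_j$ outside $\Upsilon(x_j)$. Both $\chi_{x_j}$ and $b_j$ lie in $S(X_N)$ — the local coefficients $(\check A_{j,k})$ extended by zero off $\Upsilon(x_j)$ still satisfy the side moment conditions — so the difference is a legitimate element of $S(X_N)$. At each $x_i \in \Upsilon(x_j)$ both functions equal $\delta_{i,j}$, while at $x_i \in X_N \setminus \Upsilon(x_j)$ one has $(\chi_{x_j}-b_j)(x_i)=-b_j(x_i)$. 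The reproducing property of the Lagrange basis in $S(X_N)$ then gives
$$\chi_{x_j}-b_j \;=\; -\sum_{x_i\in X_N\setminus \Upsilon(x_j)} b_j(x_i)\,\chi_{x_i},$$
and matching coefficients of $\varPhi(\cdot,x_k)$ and $Y_\ell^\mu$ yields $A_{j,k}-\check A_{j,k} = -\sum_{x_i\notin\Upsilon(x_j)} b_j(x_i)\,A_{i,k}$ together with the analogous identity for the $B$-coefficients.

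Next I would combine two sums. First, the $\ell_1$ sum of a single full Lagrange coefficient vector: from (\ref{lagrange_props}) and a standard quasi-uniform dyadic counting argument (at most $O(n)$ points at distance $nq$ from $x_i$), one gets $\sum_k |A_{i,k}| \le Ch^{2-2m}$; a parallel bound, drawn from the analysis in \cite{HNW-p}, controls $\sum_{\ell,\mu}|B_{i,\ell,\mu}|$ by a comparable power of $h$. Second, the total mass of $b_j$ on the far nodes: the local Lagrange functions of \cite{FHNWW} enjoy pointwise decay $|b_j(x)|\le C\exp(-\nu\,\dist(x_j,x)/h)$, so for $x_i\notin\Upsilon(x_j)$ (where $\dist(x_j,x_i) > Kh|\log h|$) one has $|b_j(x_i)|\le Ch^{K\nu}$. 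Summing these exponentially decaying terms over the quasi-uniform set yields $\sum_{x_i\notin \Upsilon(x_j)}|b_j(x_i)| \le C h^{K\nu - \varepsilon}$ for any small $\varepsilon>0$ (absorbing the $|\log h|$).

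Combining these via the triangle inequality gives
$$\sum_k|A_{j,k}-\check A_{j,k}| \;\le\; \Bigl(\sum_{x_i\notin \Upsilon(x_j)}|b_j(x_i)|\Bigr)\,\sup_i\sum_k|A_{i,k}| \;\le\; C h^{K\nu-\kappa_m},$$
with $\kappa_m$ absorbing both the $h^{2-2m}$ factor and the logarithmic correction, and the same reasoning yields the corresponding bound for the $B$-coefficients.

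The principal obstacle is securing the exponential decay bound $|b_j(x)| \le C\exp(-\nu \dist(x_j,x)/h)$ for the local Lagrange away from its node, which does not come for free from the defining equations (\ref{local_lagrange}) — on $X_N\setminus\Upsilon(x_j)$ the values of $b_j$ are unconstrained. The standard route, used in \cite{FHNWW,HNRW1,HNRW2}, is a perturbation argument showing that $b_j$ is close enough to $\chi_{x_j}$ to inherit its decay, with the cutoff $Kh|\log h|$ chosen precisely so that the perturbation is of order $h^{K\nu}$. A secondary technical point is confirming that the $B$-coefficients of the full Lagrange sum admissibly; this follows from a combination of the Marcienkiewicz–Zygmund inequality (\ref{MZ}) and the kernel-side decay already established. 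Once these two inputs are in place, the argument reduces to quasi-uniform bookkeeping of exponential tails.
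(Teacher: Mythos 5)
Your bookkeeping is fine as far as it goes: since the zero-extended coefficients of $b_j$ still satisfy the moment conditions, both functions lie in $S(X_N)$, the nodal values give $\chi_{x_j}-b_j=-\sum_{x_i\notin\Upsilon(x_j)}b_j(x_i)\,\chi_{x_i}$, coefficient matching is legitimate by uniqueness of the admissible representation, and the quasi-uniform tail summation together with $\sum_k|A_{i,k}|\le Ch^{2-2m}$ from (\ref{lagrange_props}) is routine. The genuine gap is exactly the input you flag and then defer: the smallness $|b_j(x_i)|\le Ch^{K\nu}$ for $x_i\notin\Upsilon(x_j)$. Nothing in the defining system (\ref{local_lagrange}) constrains $b_j$ off $\Upsilon(x_j)$, and in \cite{FHNWW,HNRW1} that decay is obtained \emph{as a consequence} of the closeness of $b_j$ to $\chi_{x_j}$ (in coefficients or in norm) --- i.e., of essentially the statement of Lemma \ref{coeff_distance}. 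So, as written, the argument is circular: the only nontrivial ingredient is borrowed from the result being proved, and what you actually establish is the elementary bookkeeping around it. (A smaller loose end of the same kind: the bound on $\sum_{\ell,\mu}|B_{i,\ell,\mu}|$ for the full Lagrange functions is asserted by reference rather than argued.)

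For comparison, the paper does not give a self-contained proof either; it cites \cite[Section 6]{FHNWW} and \cite[Section 4]{HNRW1}. The argument there runs in the opposite direction from yours and avoids the circularity: because $\chi_{x_j}(x_k)=\delta_{j,k}$ holds in particular at the points of $\Upsilon(x_j)$, the local function $b_j$ is precisely the kernel interpolant of $\chi_{x_j}$ on $\Upsilon(x_j)$, i.e.\ (up to the polynomial part) its native-space orthogonal projection onto $S(\Upsilon(x_j))$. One then bounds $\|\chi_{x_j}-b_j\|_{\Nn}$ by comparison with a competitor supported on the centers in $\Upsilon(x_j)$, using the exponential decay of $\chi_{x_j}$ and of its coefficients (\ref{lagrange_props}) outside $B(x_j,Kh|\log h|)$, which yields a bound of order $h^{K\nu-c_m}$; the coefficient estimates (\ref{coeff_dist_inequality}) then follow from stability of the augmented collocation system, the smallest-eigenvalue bound (as in Lemma \ref{lower_numerical_range}) being the source of the loss $h^{-\kappa_m}$. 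If you want a proof rather than a reduction, you should adopt that order of argument (closeness of $b_j$ to $\chi_{x_j}$ first, pointwise decay of $b_j$ afterwards); once such a closeness estimate is in hand, your expansion in the full Lagrange basis becomes unnecessary, since the coefficient bounds drop out of the stability step directly.
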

%\begin{proof} 
This lemma follows from \cite[Section 6]{FHNWW}
as well as \cite[Section 4]{HNRW1}.

 Because the coefficients of $b_j$ are close to those of $\chi_{x_j}$, the two functions are
close in any normed space $Z$ in which $\max_{x\in \Sph^2}\|\varPhi(\cdot,x)\|_Z$ 
and $\max_{\ell,\mu}\|Y_{\ell}^{\mu}\|_Z$
are finite.
This is a simple consequence of the triangle inequality:
\begin{equation}\label{generic_norm_bound}
\|b_j - \chi_{x_j}\|_Z 
\le
\left( \sum_{k=1}^N |A_{j,k} - \tilde{A}_{j,k}| \right)\max_{k}\|\varPhi(\cdot, x_k)\|_Z
+ 
\left( \sum_{\ell=0}^{m-1}\sum_{|\mu|\le \ell} |B_{j,\ell,\mu} - \tilde{B}_{j,\ell,\mu}| \right)
\max_{\ell,\mu}\|Y_{\ell}^{\mu}\|_Z
\end{equation}
and Lemma \ref{coeff_distance}.
\begin{lemma}\label{difference}
For a differential operator $\opL$ of order $\m< m-d/2$, 
the functions $\bigl(\opL b_{j} \bigr)_{j=1}^N$ satisfy
$$\bigl\| \opL b_{j}  - \opL \chi_{x_j}\bigr\|_{\infty}\le Ch^{J}$$
with $J= K\nu -\kappa_m$.
\end{lemma}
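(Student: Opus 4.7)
I would apply $\opL$ directly to the two expansions of $b_j$ and $\chi_{x_j}$. By linearity of $\opL$ together with (\ref{full_lagrange}) and (\ref{local_lagrange}),
\begin{equation*}
\opL b_j - \opL \chi_{x_j}
= \sum_{k=1}^N (\check A_{j,k} - A_{j,k})\, \opL \varPhi(\cdot, x_k)
+ \sum_{\ell=0}^{\tilde{m}}\sum_{|\mu|\le \ell} (\check B_{j,\ell,\mu} - B_{j,\ell,\mu})\, \opL Y_\ell^\mu,
\end{equation*}
and the triangle inequality in $\|\cdot\|_\infty$ yields
\begin{equation*}
\|\opL b_j - \opL \chi_{x_j}\|_\infty
\le M_\varPhi \sum_{k=1}^N |\check A_{j,k} - A_{j,k}|
+ M_Y \sum_{\ell=0}^{\tilde{m}}\sum_{|\mu|\le \ell} |\check B_{j,\ell,\mu} - B_{j,\ell,\mu}|,
\end{equation*}
where $M_\varPhi := \max_k \|\opL \varPhi(\cdot, x_k)\|_\infty$ and $M_Y := \max_{\ell\le \tilde{m}, |\mu|\le \ell}\|\opL Y_\ell^\mu\|_\infty$. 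Lemma \ref{coeff_distance} controls both coefficient sums by $C h^{K\nu - \kappa_m}$, so it remains to show that $M_\varPhi$ and $M_Y$ are finite constants independent of $h$ and $j$.

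The spherical-harmonic maximum $M_Y$ is immediate: only $(\tilde{m}+1)^2 = m^2$ pairs $(\ell,\mu)$ contribute, and each $Y_\ell^\mu$ is smooth, so $M_Y$ is a constant depending only on $m$ and $\opL$. For $M_\varPhi$, my plan is to route through the Sobolev scale. The Hilbert--Schmidt expansion (\ref{HS}) with $\widehat\phi(\ell) \sim \ell^{-2m}$ together with the addition formula gives
\begin{equation*}
\|\varPhi(\cdot, x)\|_{H^m(\Sph^2)}^2
= \sum_{\ell=0}^\infty (1 + |\nu_\ell|)^m\, \widehat\phi(\ell)^2\, \frac{2\ell+1}{4\pi},
\end{equation*}
whose summand is $\mathcal O(\ell^{-2m+1})$ and therefore convergent (uniformly in $x$); hence $\varPhi(\cdot, x) \in H^m(\Sph^2)$ with norm bounded independently of $x$. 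Since $\opL$ is a differential operator of order $\m$, it maps $H^m(\Sph^2)$ continuously into $H^{m-\m}(\Sph^2)$, and the hypothesis $\m < m - d/2$ with $d=2$ is precisely the condition for the Sobolev embedding $H^{m-\m}(\Sph^2)\hookrightarrow C(\Sph^2)$. Chaining the three estimates gives $M_\varPhi < \infty$, and the bound $\|\opL b_j - \opL \chi_{x_j}\|_\infty \le C h^{K\nu - \kappa_m} = C h^J$ follows.

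\textbf{Main obstacle.} The delicate step is the uniform bound on $M_\varPhi$. The hypothesis $\m < m - d/2$ is exactly the threshold for pushing $\opL \varPhi(\cdot, x)$ into $C(\Sph^2)$ via Sobolev embedding; attempting to bound $\opL\varPhi(\cdot, x)$ directly by analysing the action of $\opL$ on the logarithmically singular restricted thin plate spline kernel would be considerably more cumbersome than the spectral route sketched above.
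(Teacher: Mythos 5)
Your argument is correct and is essentially the paper's proof: both hinge on Lemma \ref{coeff_distance} plus a triangle inequality over the kernel/spherical-harmonic expansion (the paper's bound (\ref{generic_norm_bound})), with the hypothesis $\m< m-d/2$ supplying the embedding that makes the relevant sup-norms finite uniformly in the center. The only (immaterial) difference is that you apply $\opL$ termwise and bound $\|\opL\varPhi(\cdot,x)\|_{\infty}$ via $H^{m-\m}\hookrightarrow C(\Sph^2)$, whereas the paper first bounds $\|b_j-\chi_{x_j}\|_{C^{\m}}$ using $\Nn\hookrightarrow C^{\m}(\Sph^2)$ and then lets the order-$\m$ operator act.
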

\begin{proof}
We note that $\Nn$ is embedded in $C^{{\m}}(\Sph^2)$, 
and 
for any $y,x\in \Sph^2$,  
$\|\varPhi(\cdot,x)\|_{C^{\m}}
%= \|\varPhi(\cdot,y)\|_{C^{\m}}
\le 
C \|\varPhi(\cdot,y)\|_{\Nn}$. Thus,  we have 
$$
\|\opL b_{j}- \opL \chi_{x_j}\|_{L_\infty(\Sph^2)} 
\le 
C\| b_{j}- \chi_{x_j}\|_{C^{{\m}}({\Sph^2})} 
\le C h^{J} $$
by applying  Lemma \ref{coeff_distance} in conjunction with 
(\ref{generic_norm_bound}) with $Z= C^{\m}(\Sph^2)$.
\end{proof}

Although the matrix $\M_{X_N}^{\sharp}$ is not sparse, it does have
 rapid off-diagonal decay. This is demonstrated in the following lemma.
\begin{lemma}
For a differential operator $\opL$ of order $\m< m-d/2$, 
the matrix $\bigl(\opL b_{j}(x_k)\bigr)_{j,k=1\dots N}$ satisfies
$$\bigl| \opL b_{j}(x_k)\bigr|\le C h^{-{\m}} \left(1+\frac{\dist(x_j,x_k)}{h}\right)^{-J}$$
with $J= K\nu -\kappa_m$.
\end{lemma}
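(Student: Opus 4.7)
The strategy is to split
\[
\opL b_j(x_k) \;=\; \opL \chi_{x_j}(x_k) \;+\; \opL(b_j-\chi_{x_j})(x_k)
\]
and estimate each term separately. The first term reflects the intrinsic decay of the full Lagrange function under $\opL$, while the second is the localization error already controlled by Lemma~\ref{difference}.

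For the localization error, Lemma~\ref{difference} gives the uniform bound $|\opL(b_j-\chi_{x_j})(x_k)| \le Ch^J$. Since $\dist(x_j,x_k)\le \pi$ on $\Sph^2$, we have $h\bigl(1+\dist(x_j,x_k)/h\bigr) \le h+\pi \le C$, hence $\bigl(1+\dist(x_j,x_k)/h\bigr)^J \le Ch^{-J}$. Multiplying and dividing by $h^{-\m}(1+\dist(x_j,x_k)/h)^{-J}$,
\[
h^J \;\le\; C h^{-\m}\,\bigl(1+\dist(x_j,x_k)/h\bigr)^{-J}\cdot h^{J+\m}\bigl(1+\dist(x_j,x_k)/h\bigr)^{J}
\;\le\; C h^{-\m}\bigl(1+\dist(x_j,x_k)/h\bigr)^{-J},
\]
using $h\le 1$ and $\m\ge 0$ to absorb the leftover factor $h^{\m}$. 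So the second term already fits the claimed envelope.

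For the first term I would establish the pointwise exponential decay of $\opL\chi_{x_j}$. Expanding $\opL\chi_{x_j}(x)=\sum_{k} A_{j,k}\opL^{(1)}\varPhi(x,x_k)+\sum_{\ell\le \tilde m,|\mu|\le \ell} B_{j,\ell,\mu}\opL Y_\ell^\mu(x)$, the bound (\ref{lagrange_props}) on the SBF coefficients, combined with the boundedness (up to a mild logarithmic singularity at the diagonal) of $\opL^{(1)}\varPhi(\cdot,x_k)$ for restricted TPS kernels and a separate estimate on the polynomial coefficients $B_{j,\ell,\mu}$ arising from the moment conditions, yields
\[
|\opL \chi_{x_j}(x)| \;\le\; C h^{-\m}\exp\!\bigl(-\tilde{\nu}\,\dist(x_j,x)/h\bigr)
\]
for some $\tilde\nu>0$, in the spirit of the Lagrange-function analysis of \cite{HNW-p,FHNWW}. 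Since exponential decay dominates polynomial decay ($\exp(-\tilde\nu t)\le C_J (1+t)^{-J}$ for every fixed $J$), this gives $|\opL\chi_{x_j}(x_k)| \le C h^{-\m}(1+\dist(x_j,x_k)/h)^{-J}$, matching the envelope needed for the first term.

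The main obstacle is the pointwise decay estimate for $\opL \chi_{x_j}$ itself. The coefficient decay (\ref{lagrange_props}) governs only the kernel part of $\chi_{x_j}$; the polynomial correction $\sum B_{j,\ell,\mu}Y_\ell^\mu$ does not exhibit any spatial decay, and must be controlled by exploiting the moment conditions on the $A_{j,k}$ together with standard Bernstein-type bounds for $\opL Y_\ell^\mu$ on $\Pi_{\tilde m}$. Once this decay is in hand, the rest is the bookkeeping above; the sum of the two estimates then gives the stated bound.
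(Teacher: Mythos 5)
Your reduction of the claim to two ingredients --- the uniform bound $\|\opL b_j-\opL\chi_{x_j}\|_\infty\le Ch^J$ from Lemma~\ref{difference} (which, since $\dist\le\pi$ on $\Sph^2$, indeed sits below the envelope $h^{-\m}(1+\dist(x_j,x_k)/h)^{-J}$) and a pointwise decay estimate $|\opL\chi_{x_j}(x)|\le Ch^{-\m}e^{-\nu\,\dist(x_j,x)/h}$ --- is exactly the structure of the paper's argument. The problem is that the second ingredient, which you yourself flag as ``the main obstacle,'' is the entire content of the lemma, and the route you sketch for it does not work. Estimating $\opL\chi_{x_j}(x)=\sum_k A_{j,k}\opL^{(1)}\varPhi(x,x_k)+\sum_{\ell,\mu}B_{j,\ell,\mu}\opL Y_\ell^\mu(x)$ by the triangle inequality, using the coefficient bound (\ref{lagrange_props}) together with boundedness of $\opL^{(1)}\varPhi$, only yields $\sum_k|A_{j,k}|\,\|\opL^{(1)}\varPhi\|_\infty\le Ch^{2-2m}$: the exponential in (\ref{lagrange_props}) decays in $\dist(x_j,x_k)$, not in $\dist(x_j,x)$, so the terms with $x_k$ near $x_j$ contribute $O(h^{2-2m})$ at \emph{every} evaluation point $x$, with no spatial decay and a power of $h$ far worse than $h^{-\m}$. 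The actual decay of $\opL\chi_{x_j}$ comes from cancellation among the kernel translates (and the polynomial tail), which coefficient bounds alone cannot see; indeed in \cite{FHNWW,HNW-p} the coefficient decay is a consequence of the function decay, not a source of it.

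The paper obtains the missing estimate differently: for $x$ with $\dist(x,x_j)\ge R$ it bounds $|\opL\chi_{x_j}(x)|$ by the $C^{\m}$ norm of $\chi_{x_j}$ on $\Sph^2\setminus B(x_j,R)$, then uses the zeros estimate \cite[Theorem~A.11]{HNW-p} --- exploiting that $\chi_{x_j}$ vanishes on the $h$-dense set $X_N\setminus\{x_j\}$ in that region --- to convert this into $Ch^{m-\m-d/2}\|\chi_{x_j}\|_{H^m(\Sph^2\setminus B(x_j,R))}$, and finally invokes the exponentially small exterior Sobolev norm $\|\chi_{x_j}\|_{H^m(\Sph^2\setminus B(x_j,R))}\le C h^{d/2-m}e^{-\nu R/h}$ from \cite[Lemma~5.4]{HNW-p}. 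That chain is what produces $|\opL\chi_{x_j}(x)|\le Ch^{-\m}e^{-\nu\,\dist(x_j,x)/h}$; your bookkeeping for the $b_j$ versus $\chi_{x_j}$ perturbation can then be kept as is. Without replacing your coefficient-based sketch by an argument of this type (or an equivalent cancellation-exploiting estimate), the proof is incomplete.
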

\begin{proof}
We have   for $|x-x_j|\le R$ that
$|\opL \chi_{x_j}(x)|\le 
C\|\chi_{x_j}\|_{C^{\m}\bigl(\Sph^2\setminus B(x_j,R)\bigr)}$.
By applying the zeros estimate 
\cite[Theorem A.11]{HNW-p}
to $\chi_{x_j}$,
this ensures that
$|\opL \chi_{x_j}(x)|\le 
Ch^{m-{\m}-d/2} \|\chi_{x_j}\|_{H^m\bigl(\Sph^2\setminus B(\xi,R)\bigr)}$.
The Sobolev norm of $ \chi_{x_j}$
can be estimated by \cite[Lemma 5.4]{HNW-p} as
$
\|\chi_{x_j}\|_{H^m(\Sph^2\setminus B(\xi,R))}
\le 
C\rho^{d/2-m} h^{d/2-m}e^{-\nu \frac{R}{h}}.
$
Thus,
$$|\opL \chi_{x_j}(x)|\le C h^{-\m}e^{-\nu \frac{|x-x_j|}{h}}$$
holds.
Finally, an application of Lemma \ref{difference} gives the result.
\end{proof}

\subsubsection{Local stencil version of the RBF-FD matrix}
Given the local Lagrange basis, we consider a  different ``local stencil'' variant of
the FD matrix, namely
$$\M_{X_N}^{\sharp}:= \bigl( \opL b_{j}(x_k)\bigr)_{j,k} = [\K_{X_N} \vert \opL P] \left[ \begin{matrix} A\\  B\end{matrix}\right]$$
where $A = (A_{j,k})$ is the sparse matrix of kernel coefficients and $B =(B_{j,\ell,\mu})$
is the $N_{m-1}\times \#\Upsilon(x_j)$ matrix of spherical harmonic coefficients.
Unlike the more conventional small stencil construction 
$\M_{X_N}^{\circ}$ 
described in the introduction,
this matrix is not row-sparse: the 
functions $b_j$ only have prescribed zeros in $\Upsilon(x_j)$. 
However, since $\#\Upsilon(x_j) \sim (\log N)^2$,
the system used to generate $b_j$ has size 
$\mathcal{O}\bigl((\log N)^2\bigr)\ll N$.

The local stencil is a small perturbation of the full stencil -- we show this by
controlling $\| \M_{X_N}^{\sharp}-\M_{X_N}\|_{p\to p}$ by row and column sums.
In other words, by
applying Lemma \ref{difference} we have
\begin{equation}\label{row_col}
\begin{rcases*}
\| \M_{X_N}^{\sharp}-\M_{X_N}\|_{1\to 1} \ \  \, 
=\max_k \sum_{j=1}^N |\opL b_{j}(x_k) - \opL \chi_{x_j}(x_k)|\\
\| \M_{X_N}^{\sharp}-\M_{X_N}\|_{\infty\to \infty}
=\max_j \sum_{k=1}^N |\opL b_{j}(x_k) - \opL \chi_{x_j}(x_k)|
\end{rcases*}
\le 
 \sum_{j=1}^N Ch^J\le Ch^{J-2}
\end{equation}
since $N\le C h^{-2}$. The bound  $\| \M_{X_N}^{\sharp}-\M_{X_N}\|_{p\to p}\le  Ch^{J-2}$
then follows for all $p\in[1,\infty]$.

An advantage of this setup is that we are able to retain the global consistency rates, thanks
to the nearness of the local Lagrange functions to the global Lagrange functions. 
First, we can give a consistency estimate
\begin{theorem}\label{local_consistency}
For $K\nu\ge \kappa_m+2m-1-\m$, we have
$\| \M_{X_N}^{\sharp} (f|_{X_N}) - (\opL f)|_{X_N}\|_{\ell_{\infty}} \le C h^{2m -1- \m}\|u\|_{H^{2m}}$.
\end{theorem}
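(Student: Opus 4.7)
The plan is to split the consistency error for the local stencil scheme into a perturbation piece (the difference between the local and full stencil matrices) and the residual consistency of the full stencil, then bound each piece separately. Concretely, I would add and subtract $\M_{X_N}(f|_{X_N})$ and apply the triangle inequality:
\begin{align*}
\bigl\| \M_{X_N}^{\sharp}(f|_{X_N}) - (\opL f)|_{X_N} \bigr\|_{\ell_\infty}
&\le
\bigl\| (\M_{X_N}^{\sharp} - \M_{X_N})(f|_{X_N}) \bigr\|_{\ell_\infty}
+
\bigl\| \M_{X_N}(f|_{X_N}) - (\opL f)|_{X_N} \bigr\|_{\ell_\infty}.
\end{align*}

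For the second term, I would invoke the full-stencil identity $\M_{X_N}(f|_{X_N}) = (\opL I_{X_N} f)|_{X_N}$ together with the doubling-trick consistency estimate (\ref{double_consistency}), which gives the bound $Ch^{2m-1-\m}\|f\|_{H^{2m}(\Sph^2)}$ directly. This step is straightforward and provides the target right-hand side of the theorem.

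For the first term, the rough strategy is to use the row/column sum bound (\ref{row_col}), $\|\M_{X_N}^{\sharp}-\M_{X_N}\|_{\infty\to\infty} \le C h^{J-2}$ with $J = K\nu - \kappa_m$, together with the Sobolev embedding $\|f|_{X_N}\|_{\ell_\infty} \le \|f\|_\infty \le C\|f\|_{H^{2m}}$. Matching exponents then requires that $J$ be large enough so that $h^{J-\text{(loss)}}\le h^{2m-1-\m}$, which by the hypothesis $K\nu\ge \kappa_m+2m-1-\m$ corresponds to $J\ge 2m-1-\m$.

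The main obstacle is exactly this exponent bookkeeping: the crude estimate $\|\M_{X_N}^{\sharp}-\M_{X_N}\|_{\infty\to\infty}\le Ch^{J-2}$ surrenders an $h^{-2}$ factor coming from $N\le Ch^{-2}$, which would force a stronger hypothesis on $K\nu$ than the one stated. To recover the two lost powers of $h$, I would refine the row-sum argument by combining the uniform difference bound $|\opL b_j(x_k)-\opL\chi_{x_j}(x_k)|\le Ch^J$ from Lemma~\ref{difference} with the off-diagonal decay $|\opL b_j(x_k)|,|\opL \chi_{x_j}(x_k)|\le Ch^{-\m}(1+\dist(x_j,x_k)/h)^{-J}$ from the preceding lemma. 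Splitting the sum $\sum_j |\opL b_j(x_k)-\opL\chi_{x_j}(x_k)||f(x_j)|$ into a near field around $x_k$ (where the uniform bound is used, and quasi-uniformity ensures only $\mathcal{O}(1)$ points contribute up to log factors) and a far field (where the polynomial decay is summable against the surface measure and yields a uniformly bounded sum) recovers a bound of the form $Ch^J \|f\|_{H^{2m}}$, which matches the stated hypothesis. Combining this with the consistency estimate from step one completes the proof.
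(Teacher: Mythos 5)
Your decomposition is exactly the one the paper uses: triangle inequality into the full-stencil consistency term, handled via \eqref{double_consistency} and the identity $\M_{X_N}(f|_{X_N})=(\opL I_{X_N}f)|_{X_N}$, plus the perturbation term $(\M_{X_N}^{\sharp}-\M_{X_N})(f|_{X_N})$; the paper's proof is essentially just these two lines, matching $h^{J}$ ($J=K\nu-\kappa_m$) against $h^{2m-1-\m}$. You are also right to flag the bookkeeping tension: the only quantitative perturbation estimate established in the paper is \eqref{row_col}, which costs the factor $N\le Ch^{-2}$, so a literal application needs $J\ge 2m+1-\m$, i.e.\ $K\nu\ge\kappa_m+2m+1-\m$, slightly stronger than the stated hypothesis (harmless for the later choice $J=5m-3$, but a real discrepancy).

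The genuine gap is in your proposed repair. The near/far splitting cannot recover the two powers of $h$ from the two lemmas you cite. The off-diagonal decay bound $|\opL b_j(x_k)|,|\opL\chi_{x_j}(x_k)|\le Ch^{-\m}\left(1+\dist(x_j,x_k)/h\right)^{-J}$ carries the prefactor $h^{-\m}$ and, since $\dist\le\pi$ on $\Sph^2$, it is never smaller than $\sim h^{J-\m}\ge h^{J}$; so pointwise it never improves on the uniform bound $Ch^{J}$ of Lemma~\ref{difference}, and any splitting of $\sum_j|\opL b_j(x_k)-\opL\chi_{x_j}(x_k)|\,|f(x_j)|$ built from these two ingredients is again bounded below in effect by $N\cdot Ch^{J}\sim Ch^{J-2}$. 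Concretely, your far-field sum over $\dist(x_j,x_k)\ge r$ is of order $h^{-\m}(r/h)^{2-J}$ (annuli of radius $nq$ contain $O(n)$ points), which for $r\sim Kh|\log h|$ is $\sim h^{-\m}|\log h|^{2-J}$ — not $O(h^{J})$, and for $\m=2$ not even $o(1)$; and a near field containing only $O(1)$ points would need radius $O(h)$, leaving the intermediate region uncontrolled. Your claim that the far field ``yields a uniformly bounded sum'' is therefore false as stated. To genuinely avoid the $h^{-2}$ loss one needs a bound on the \emph{difference} that itself decays in $\dist(x_j,x_k)/h$, e.g.\ $|\opL b_j(x)-\opL\chi_{x_j}(x)|\le Ch^{J'}(1+\dist(x,x_j)/h)^{-s}$ with $s>2$ (such estimates exist in the localization literature cited, but are not stated in this paper); otherwise the correct and simple fix is to keep the crude bound \eqref{row_col} and strengthen the hypothesis to $K\nu\ge\kappa_m+2m+1-\m$.
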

\begin{proof}
By the consistency result (\ref{double_consistency}), we have 
$\| \M_{X_N} (f|_{X_N}) - (\opL f)|_{X_N}\|_{\ell_{\infty}} \le C h^{2m - 1-\m}\|u\|_{H^{2m}}$
for the full FD matrix. Thus if $K\nu -\kappa_m=J\ge 2m - 1-\m$, the result follows.
\end{proof}

Finally, although there is no satisfactory stability theory for general differential operators, we know at least that local
stencil provides inverse stability commensurate with the full problem. 
\begin{theorem}\label{near_stability}
Suppose $ K\nu >2+\kappa_m$. For any $p\in[1,\infty]$, if  $X_N\subset \Sph^2$ with $h$ sufficiently small, we have
$$\| (\M_{X_N}^{\sharp} )^{-1}\|_{p\to p} \le 2\| (\M_{X_N} )^{-1}\|_{p\to p}. $$
\end{theorem}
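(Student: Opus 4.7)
The plan is to exhibit $\M_{X_N}^{\sharp}$ as a small Neumann perturbation of $\M_{X_N}$ and invert via the usual geometric-series argument. Setting $E := \M_{X_N}^{\sharp} - \M_{X_N}$, the row-and-column-sum bound (\ref{row_col}) already delivers $\|E\|_{p\to p} \le Ch^{J-2}$ uniformly in $p\in[1,\infty]$, where $J = K\nu - \kappa_m > 2$ by assumption. In particular, the perturbation is $o(1)$ as $h\to 0$.

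Next, I would factor $\M_{X_N}^{\sharp} = \M_{X_N}(I + \M_{X_N}^{-1}E)$. As soon as $h$ is small enough that $\|\M_{X_N}^{-1}\|_{p\to p}\cdot Ch^{J-2}\le 1/2$, the operator $I + \M_{X_N}^{-1} E$ is invertible by the Neumann series, with $\|(I + \M_{X_N}^{-1} E)^{-1}\|_{p\to p} \le \sum_{k\ge 0} 2^{-k} = 2$. Taking inverses yields $(\M_{X_N}^{\sharp})^{-1} = (I+\M_{X_N}^{-1}E)^{-1}\M_{X_N}^{-1}$, and submultiplicativity gives the desired bound $\|(\M_{X_N}^{\sharp})^{-1}\|_{p\to p}\le 2\|\M_{X_N}^{-1}\|_{p\to p}$. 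This handles all $p\in[1,\infty]$ at once because (\ref{row_col}) bounds the $1\to 1$ and $\infty\to \infty$ norms simultaneously and Riesz--Thorin interpolation then controls the intermediate cases.

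The only delicate issue---and the reason the statement requires ``$h$ sufficiently small'' instead of a fully explicit threshold---is that $\|\M_{X_N}^{-1}\|_{p\to p}$ is itself permitted to grow as $h\to 0$. The hypothesis $J>2$ only guarantees $\|E\|_{p\to p}\to 0$, so the admissible range of $h$ tacitly depends on the growth rate of the stability constant of $\M_{X_N}$. In the Helmholtz / restricted thin-plate spline setting of Proposition \ref{tps_stability}, where $\|\M_{X_N}^{-1}\|_{2\to 2}\le Cq^{2-2m}$, a fully quantitative version of the theorem would additionally require $K\nu > 2m+\kappa_m$ so that $h^{J-2}\cdot h^{2-2m}\to 0$; but no such refinement is needed for the abstract comparison claim as stated, and there is no further obstacle beyond the Neumann argument itself.
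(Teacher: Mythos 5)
Your proof is correct and essentially identical to the paper's: both control the perturbation $\M_{X_N}^{\sharp}-\M_{X_N}$ via the row/column-sum bound (\ref{row_col}) (giving $Ch^{J-2}$ with $J=K\nu-\kappa_m>2$ for all $p\in[1,\infty]$) and then invert through a standard Neumann-series argument once the perturbation is below $\tfrac{1}{2}\|\M_{X_N}^{-1}\|^{-1}$. Your closing observation that the admissible range of $h$ tacitly depends on how fast $\|\M_{X_N}^{-1}\|_{p\to p}$ may grow is a fair reading of the same caveat left implicit in the paper's ``$h$ sufficiently small.''
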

\begin{proof}
For any submultiplicative norm, if $\|\M_{X_N}^{\sharp}-\M_{X_N}\|<\frac1{2\|(\M_{X_N})^{-1}\|}$ then a standard Neumann
series argument gives
$$\bigl\|\bigl(\M_{X_N}^{\sharp}\bigr)^{-1}\bigr\| \le \bigl\|\bigl(\M_{X_N}\bigr)^{-1}\bigr\|
\,\sum_{j=0}^{\infty} \left\| \bigl(\Id- \M_{X_N}^{\sharp}(\M_{X_N})^{-1}\bigr)\right\|^j
\le 2 \|(\M_{X_N})^{-1}\|.
$$
Since $\M_{X_N}^{\sharp}-\M_{X_N} =  \bigl(\opL b_{j}(x_k) - \opL \chi_{x_j}(x_k)\bigr)_{j,k}$,
$\|\M_{X_N}^{\sharp}-\M_{X_N}\|\to 0$ as $h\to 0$.
\end{proof}

Taken together, Theorem \ref{near_stability} and Theorem \ref{local_consistency} show that if the
full FD matrices are stable  
(i.e., if  for some constant $C$, $\|\M_{X_N}^{-1}\|_{\infty\to\infty} <C$ for all $X_N$),
then the local version is convergent:
$$\left\|u|_{X_N} - (\M_{X_N}^{\sharp})^{-1} (f|_{X_N})\right\|_{\ell_{\infty}}\le 2\|(\M_{X_N})^{-1}\|_{\infty\to \infty} \times  C h^{2m -1- \m}\|u\|_{H^{2m}}.
$$

\section{Error analysis for the localized RBF-FD approach}\label{S:error}
In this section, we present an error analysis for the solution of Helmholtz-type differential equations 
using the RBF-FD method  with a full FD matrix  as well as the  with localized kernels. 
This provides a problem where we can guarantee stability of the full (and therefore localized) RBF-FD matrices.
\begin{theorem}\label{full_theorem}
	 Let $\opL$  be a Helmholtz type operator of order $\m = 2$ having the form $\opL =\alpha-\Delta $, 
	 with $\alpha>0$ and let $\Phi$ be the restricted thin plate spline of order $m$.
	 We consider the differential equation $\opL u = f$ and its discretization via the linear system
$	 
	 \M_{X_N}c = \Sigma_{N}f
$
	and the resulting approximation 
	${u}_{N;f}:=\sum_{j=1}^{N} c_j\chi_{x_j} $.
	Then, there is a constant $C$ such that
	\begin{align*}
		\left\|u- {u}_{N;f}\right\|_{L_{2}} \le C h^{m-1} \left\|f \right\|_{W_{2}^{m-1}(\Sph^2)}  
		 \text{ for all } h\le h_0.
	\end{align*}
\end{theorem}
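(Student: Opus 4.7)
The plan is to exploit an algebraic identity that reduces the entire error analysis to a classical interpolation estimate: $\opL u_{N;f}$ is itself a kernel interpolant of $f$, but in a different CPD space. Setting $\varPsi:=\opL^{(1)}\varPhi$, I claim that $\opL u_{N;f} = I^{\varPsi}_{X_N}f$, the unique CPD $\varPsi$-interpolant of $f$ at $X_N$ in $S_{\varPsi}(X_N,\tilde m)$. Once this identity is in hand, the error $u-u_{N;f}$ is simply $\opL^{-1}$ applied to a $\varPsi$-interpolation residual, which is controlled by the standard Sobolev estimate in the native space $H^{m-1}(\Sph^2)$ of $\varPsi$.

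First I would verify the identity. Unfold each Lagrange function as $\chi_{x_j}=\sum_k A_{j,k}\varPhi(\cdot,x_k)+P_j$ with $P_j\in\Pi_{\tilde m}$, and recall the moment conditions $\sum_k A_{j,k}Y_\ell^\mu(x_k)=0$ for $\ell\le\tilde m$. Then $u_{N;f}=\sum_j c_j\chi_{x_j}$ has the form $\sum_k d_k\varPhi(\cdot,x_k)+Q$ with $d_k=\sum_j c_jA_{j,k}$ and $Q\in\Pi_{\tilde m}$; summing the individual moment conditions weighted by $c_j$ shows that $(d_k)$ again annihilates $\Pi_{\tilde m}$. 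Applying $\opL$ and using $\opL\varPhi(\cdot,x_k)=\varPsi(\cdot,x_k)$ together with the fact that $\opL=\alpha-\Delta$ acts on $Y_\ell^\mu$ as multiplication by $\alpha+\ell(\ell+1)$ --- hence restricts to an isomorphism of $\Pi_{\tilde m}$ --- one concludes $\opL u_{N;f}\in S_{\varPsi}(X_N,\tilde m)$. The RBF-FD equation $\M_{X_N}c=f|_{X_N}$ is, by construction, the statement $(\opL u_{N;f})(x_k)=f(x_k)$ for every $k$, so uniqueness of the CPD $\varPsi$-interpolant yields the identity.

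With this identity, $u-u_{N;f}=\opL^{-1}(f-I^{\varPsi}_{X_N}f)$. Because $\opL$ has spectrum $\{\alpha+\ell(\ell+1)\}_{\ell\ge0}\subset[\alpha,\infty)$, the operator $\opL^{-1}$ is bounded on $L_2(\Sph^2)$ with norm at most $1/\alpha$, so
\[
\|u-u_{N;f}\|_{L_2}\le \tfrac{1}{\alpha}\bigl\|f-I^{\varPsi}_{X_N}f\bigr\|_{L_2}.
\]
Finally, since $\widehat{\psi}(\ell)\sim\ell^{-2(m-1)}$, the native space of $\varPsi$ is $H^{m-1}(\Sph^2)$, and the standard Sobolev-type kernel interpolation estimate (the CPD analogue of \cite[Theorem 5.5]{NSWW} cited in the background) applied with $\beta=m-1$ and $\tau=0$ gives $\|f-I^{\varPsi}_{X_N}f\|_{L_2}\le Ch^{m-1}\|f\|_{H^{m-1}}$, which completes the proof.

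The main obstacle will be the bookkeeping in Step 1: tracking how the CPD moment conditions on the $\varPhi$-coefficients translate into moment conditions on the $\varPsi$-coefficients after applying $\opL$, and confirming that $\Pi_{\tilde m}$ is preserved. Everything hinges on the spherical harmonics being eigenfunctions of $\opL$ and on the spectral gap $\alpha>0$. A secondary technical point is ensuring the cited $L_2$-interpolation estimate is available in the CPD setting for $\varPsi$, but this is a routine adaptation of the strictly positive definite case; notably, the stronger stability results of Section~\ref{inverse_stability} are not needed for this theorem, since the analysis bypasses $\M_{X_N}^{-1}$ entirely via the algebraic identity.
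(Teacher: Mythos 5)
Your proposal is correct and follows essentially the same route as the paper's own proof: bound $\|u-u_{N;f}\|_{L_2}$ by $\frac1\alpha\|f-\opL u_{N;f}\|_{L_2}$ using the spectral lower bound of $\opL=\alpha-\Delta$, identify $\opL u_{N;f}$ as the $\varPsi=\opL^{(1)}\varPhi$ interpolant of $f$, and invoke the $H^{m-1}$ native-space interpolation estimate. You merely spell out details the paper leaves implicit (the moment-condition bookkeeping showing $\opL u_{N;f}\in S_{\varPsi}(X_N,\tilde m)$, and the CPD-to-SPD adjustment, which the paper handles by citing \cite{NRW}).
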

\begin{proof}
By  positivity of $-\Delta$, 
we have
$\alpha\| f\|_{L_{2}} \le   \| \opL f\|_{L_{2}} $,
and so
\begin{align*}
	\left\|u- u_{N;f}\right\|_{L_{2}}\le 
	\frac1\alpha\left\| \opL \left( u- u_{N;f} \right)\right\|_{L_{2}}= \frac1\alpha\left\| f- \opL u_{N;f}\right\|_{L_{2}}.
\end{align*}
From 
$
	\opL u_{N;f}(x_k)=\sum_{n=1}^{N} c_{n} \opL\chi_{\xi_{n}} (x_k)=f(x_k),
$
we can deduce that $\opL u_{N;f}$ is the kernel-based interpolant using the kernel $\Psi=\opL \Phi$. 
As outlined in \cite{NRW},
 we can modify the conditionally positive definite kernel interpolant in the first modes to get an interpolant based on a 
 strictly positive definite kernel. Thus, we will get
 \begin{align}
 	\left\| f- \opL u_{N;f}\right\|_{L_{2}}\le C h^{m-1} \left\| f\right\|_{W_{2}^{m-1}(\Sph^2)},
 \end{align}
 where the exponent $m-1$ is due to the change $\Phi \to \Psi$.
 \end{proof}
 
 A similar convergence result can now be shown for the localized SBF-FD method using matrices $ \M_{X_N}^{\sharp}$.

\begin{theorem}
	 Let  $\opL =\alpha-\Delta $, 
	 with $\alpha>0$ and let $\Phi$ be a restricted thin plate spline of order $m$.
	 We consider the differential equation $\opL u = f$ and its discretization via the linear system
%	 \begin{align*}
$	 \M_{X_N}^{\sharp}c = \Sigma_{N}f$
%	\end{align*}
	and the resulting approximation 
	$\check{u}_{N;f}:=\sum_{h=1}^{N} \check{c}_{j} b_{j}$.
	Then, there is a constant $C$ such that
	\begin{align*}
		\left\|u- \check{u}_{N;f}\right\|_{L_{2}} \le 
		C\left( h^{m-1} + 
		h^{J+2-4m}\right) \left\|f \right\|_{W_{2}^{m-1}(\Sph^2)}\quad \text{for all } h\le h_0.
	\end{align*}
\end{theorem}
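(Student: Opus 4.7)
The strategy is to compare the localized approximant $\check u_{N;f}$ with the full-stencil approximant $u_{N;f}$ analyzed in Theorem \ref{full_theorem}, via the triangle inequality
\begin{equation*}
\|u-\check u_{N;f}\|_{L_2}\le \|u-u_{N;f}\|_{L_2}+\|u_{N;f}-\check u_{N;f}\|_{L_2}.
\end{equation*}
Theorem \ref{full_theorem} immediately handles the first term with $Ch^{m-1}\|f\|_{W_2^{m-1}(\Sph^2)}$, so the whole task is to produce the second summand $Ch^{J+2-4m}\|f\|_{W_2^{m-1}(\Sph^2)}$ from the perturbation of the matrix and the basis. I would split
\begin{equation*}
u_{N;f}-\check u_{N;f}\;=\;\sum_{j=1}^N(c_j-\check c_j)\,b_j\;+\;\sum_{j=1}^N c_j\bigl(\chi_{x_j}-b_j\bigr),
\end{equation*}
and estimate both pieces separately in $L_2(\Sph^2)$.

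For the coefficient gap, the defining identity $\M_{X_N}c=\Sigma_Nf=\M_{X_N}^{\sharp}\check c$ gives $\M_{X_N}(c-\check c)=(\M_{X_N}^{\sharp}-\M_{X_N})\check c$, hence
\begin{equation*}
\|c-\check c\|_{\ell_2}\;\le\;\|\M_{X_N}^{-1}\|_{2\to 2}\,\|\M_{X_N}^{\sharp}-\M_{X_N}\|_{2\to 2}\,\|\check c\|_{\ell_2}.
\end{equation*}
Proposition \ref{tps_stability} gives $\|\M_{X_N}^{-1}\|_{2\to 2}\le Cq^{2-2m}$, the perturbation bound (\ref{row_col}) (interpolating between the $\ell_1\to\ell_1$ and $\ell_\infty\to\ell_\infty$ row/column sum estimates) yields $\|\M_{X_N}^{\sharp}-\M_{X_N}\|_{2\to 2}\le Ch^{J-2}$, and Theorem \ref{near_stability} together with $\|\Sigma_Nf\|_{\ell_2}\le Cq^{-1}\|f\|_\infty$ controls $\|\check c\|_{\ell_2}\le C h^{1-2m}\|f\|_\infty$. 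Multiplying and absorbing $q\sim h$ gives $\|c-\check c\|_{\ell_2}\le Ch^{J+1-4m}\|f\|_\infty$.

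To convert the coefficient estimate back to an $L_2$ function bound for $\sum_j(c_j-\check c_j)b_j$, I invoke the Riesz basis property of the (local) Lagrange system on quasi-uniform nodes on $\Sph^2$, which produces one additional power of $h$ (reflecting the localization width of each $b_j$ in the volume element); this turns $h^{J+1-4m}$ into the claimed $h^{J+2-4m}$. The leftover cross term $\sum_j c_j(\chi_{x_j}-b_j)$ is handled by the same recipe: Lemma \ref{coeff_distance} together with (\ref{generic_norm_bound}) applied with $Z=L_2(\Sph^2)$ gives $\|\chi_{x_j}-b_j\|_{L_2}\le Ch^J$, and $\|c\|_{\ell_2}$ is again controlled by Proposition \ref{tps_stability} together with the discrete sampling bound, showing this term to be of strictly higher order in $h$ and hence absorbed. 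Embedding $W_2^{m-1}(\Sph^2)\hookrightarrow L_\infty(\Sph^2)$ converts the $\|f\|_\infty$ factor into the stated $\|f\|_{W_2^{m-1}(\Sph^2)}$.

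The main technical obstacle is bookkeeping: each invocation of stability costs roughly $h^{2-2m}$, and one must be sure that the two stability factors (one for $\M_{X_N}^{-1}$, one hidden in $\|\check c\|_{\ell_2}$ via Theorem \ref{near_stability}) combine correctly with the matrix perturbation $h^{J-2}$ and the basis-to-function conversion to give exactly the exponent $J+2-4m$. The most delicate point is justifying the Riesz-basis behavior of $\{b_j\}$ with constants of the right order in $h$; once this is in place (either directly, or by transferring from the known Riesz behavior of $\{\chi_{x_j}\}$ via Lemma \ref{coeff_distance}), summing the two contributions from the triangle inequality yields the stated convergence estimate.
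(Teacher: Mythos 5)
Your proposal is correct and follows essentially the same strategy as the paper: triangle inequality against the full-stencil approximant of Theorem \ref{full_theorem}, a splitting of $u_{N;f}-\check u_{N;f}$ into a coefficient-difference term and a basis-difference term, the perturbation bound $\|\M_{X_N}^{\sharp}-\M_{X_N}\|_{2\to 2}\le Ch^{J-2}$ from \eqref{row_col}, stability from Proposition \ref{tps_stability} and Theorem \ref{near_stability}, a Riesz-basis conversion giving the extra factor $h$, and finally $\|\Sigma_N f\|_{\ell_2}\le \sqrt{N}\,\|f\|_{W_2^{m-1}}\le h^{-1}\|f\|_{W_2^{m-1}}$; your exponent bookkeeping ($h^{J-2}\cdot h^{2-2m}\cdot h^{1-2m}\cdot h = h^{J+2-4m}$) matches the paper's. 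Two local deviations are worth noting. First, your decomposition puts the coefficient gap on the local basis, $\sum_j(c_j-\check c_j)b_j + \sum_j c_j(\chi_{x_j}-b_j)$, whereas the paper writes $\sum_j(c_j-\check c_j)\chi_{x_j} + \sum_j\check c_j(\chi_{x_j}-b_j)$; the paper's choice lets it invoke the Riesz property exactly as stated in \cite[Theorem 5.3]{FHNWW} for the true Lagrange basis, while yours needs the analogous property for $\{b_j\}$ -- the transfer you indicate via Lemma \ref{coeff_distance} does work (the perturbation $\|\sum_j a_j(b_j-\chi_{x_j})\|_{L_2}\le C\sqrt{N}h^{J}\|a\|_{\ell_2}$ is negligible against $h\|a\|_{\ell_2}$ for $J$ large), but it should be made explicit rather than left as an alternative. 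Second, your coefficient identity $c-\check c=\M_{X_N}^{-1}(\M_{X_N}^{\sharp}-\M_{X_N})\check c$, combined with $\|\check c\|_{\ell_2}\le \|(\M_{X_N}^{\sharp})^{-1}\|_{2\to2}\|\Sigma_N f\|_{\ell_2}$ and Theorem \ref{near_stability}, is a slightly more direct route than the paper's Neumann-series manipulation of $(\M_{X_N}^{\sharp})^{-1}$, and it yields the same order; your handling of the cross term (order $h^{J-2m}$, absorbed since $m\ge 2$) is likewise consistent with the paper's $h^{J+2-2m}\|\Sigma_N f\|_{\ell_2}$ bound.
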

This motivates us to choose $J=5m-3$.
\begin{proof}
By the triangle inequality, we have
%\begin{align*}
$
	\left\|u- \check{u}_{N;f}\right\|_{L_{2}} \le\left\|u- u_{N;f}\right\|_{L_{2}}+ \left\| \check{u}_{N;f}- u_{N;f}\right\|_{L_{2}}
	$,
	where we use the ``full'' FD approximant $u_{N;f}:=\sum_{j=1}^{N} c_{j} \chi_{x_j} $
	obtained in Theorem (\ref{full_theorem}).
%\end{align*}
We may split the error $\check{u}_{N;f}- u_{N;f}$ as
\begin{align*}
	\check{u}_{N;f}- u_{N;f}
	=
	\sum_{j=1}^{N} \left(c_{j}-\check{c}_{j} \right) \chi_{x_{j}} 
	+ 
	\sum_{j=1}^{N} \check{c}_{j} \left( \chi_{x_{k}}-b_{j}\right). 
\end{align*}
For the first term in this splitting, we can employ the Riesz basis property (\cite[Theorem 5.3]{FHNWW}) to obtain
	\begin{align*}
		\left\| \sum_{j=1}^{N} \left(c_{j}-\check{c}_{j} \right) \chi_{x_{j}} \right\|^2_{L_{2}}  
		\sim 
		h^{2} \sum_{j=1}^{N} \left(c_{j}-\check{c}_{j} \right)^2 
%	=
%	h^{2} \left\|\left(  \M_{X_N}^{-1}-( \M_{X_N}^{\sharp})^{-1}\right) \Sigma_{N}f \right\|^{2}_{2}
\end{align*}
%This follows because  $c-\check{c} = (  \M_{X_N}^{-1}-( \M_{X_N}^{\sharp})^{-1}) \Sigma_{N}f$.
%  \M_{X_N} c$.
Using the fact that $\check{c} = (\M^{\sharp}_{X_N})^{-1}  \Sigma_{N}f= (\M^{\sharp}_{X_N})^{-1} \M_{X_N}c$, we calculate
	\begin{align*}
		 \check{c}-c %=(\M^{\sharp}_{X_N})^{-1}  \Sigma_{N}f -c
		 =(\M^{\sharp}_{X_N})^{-1} \M_{X_N}c  -c=(\M^{\sharp}_{X_N})^{-1}\left(\M_{X_{N}}- \M^{\sharp}_{X_N} \right)c.
	\end{align*}
	Moreover, we have 
	\begin{align*}
		\left(\M^{\sharp}_{X_N}\right)^{-1}=\left(\M_{X_{N}}\left( \Id +\M^{-1}_{X_{N}}\left(\M^{\sharp}_{X_{N}} -\M_{X_{N}} \right) \right)  \right)^{-1}= 
		\left( \Id +\M^{-1}_{X_{N}}\left(\M^{\sharp}_{X_{N}} -\M_{X_{N}} \right) \right)^{-1}\M^{-1}_{X_{N}}.
	\end{align*}
	Hence, we obtain 
	\begin{align*}
		\check{c}-c=\left( \Id +\M^{-1}_{X_{N}}\left(\M^{\sharp}_{X_{N}} -\M_{X_{N}} \right) \right)^{-1}\M^{-1}_{X_{N}}\left(\M_{X_{N}}- \M^{\sharp}_{X_N} \right)c.
	\end{align*}

Now use %
$\| \M_{X_N}- \M_{X_N}^{\sharp}\|_{2\to 2} \le C h^{J- 2 }$,
which is a consequence of
(\ref{row_col}), 
 with $F:=\M_{X_N}^{-1} \left( \M_{X_N} -  \M_{X_N}^{\sharp}\right)$, 
 to deduce that
%\begin{align*}
	$\|F\|_{2\to 2}
	\le C h^{J-2}  \| (\M_{X_N})^{-1}\|_{2\to 2}  <1$
	by Proposition \ref{tps_stability}.
%\end{align*}

Thus by standard estimates, we obtain 
\begin{align*}
	\left\| c - \check{c}\right\|_{2} 
	\le 
	\left( \frac{\|F\|_{2\to 2}}{1-\|F\|_{2\to 2}}\right) \left\| c \right\|_{2}
	\lesssim  
	\left( \frac{\|F\|_{2\to 2}}{1-\|F\|_{2\to 2}}\right)  \|  \M_{X_N}^{-1}\|_{2\to 2}  \left\| \Sigma_N f\right\|_{2} 
	\lesssim h^{J-2}  \|  \M_{X_N}^{-1}\|_{2\to 2} ^2 \left\| \Sigma_N f\right\|_{2} .
\end{align*}
By the Riesz basis property (\cite[Theorem 5.3]{FHNWW}), followed by \eqref{full_inverse_stability},  we obtain the estimate 
	\begin{align*}
		\left\| \sum_{j=1}^{N} \left(c_{j}-\check{c}_{j} \right) \chi_{x_j} \right\|_{L_{2}} \le  
		C h^{J -1  }  \|  \M_{X_N}^{-1}\|_{2\to 2} ^2 \left\| \Sigma_N f\right\|_{2} 
		\le C h^{J+3-4m} \left\| \Sigma_N f\right\|_{2}  .
	\end{align*}
For the term $\sum_{j=1}^{N} \check{c}_{j} \left( \chi_{x_{j}}-b_{j}\right)$, 
we begin by  simply expanding:
	\begin{align*}
		\left\|\sum_{n=1}^{N} \check{c}_{n} \left( \chi_{x_{n}}-b_{n}\right) \right\|^2_{L_{2}}
		=\left\|  \sum_{n,\tilde{n}=1}^{N} \check{c}_{n}  \check{c}_{\tilde{n}}  (\chi_{x_n} -b_n)(\chi_{x_{\tilde{n}}} -b_{\tilde{n}}) \right\|_{L_{1}}
	 	\le 
	 	C\max_{1\le j \le N} \|\chi_{x_j} -b_j\|^2_{\infty}  \sum_{n,\tilde{n}=1}^{N} \check{c}_{n}  \check{c}_{\tilde{n}} .
	\end{align*}
The final double sum is simply $ \|\check{c} \|^{2}_{2}$, which can be estimated as
 $ \|\check{c} \|^{2}_{2}\le
 \left\|(\M_{X_N}^{\sharp})^{-1}\right\|_{2\to 2}^2 \left\| \Sigma_N f\right\|_{2}^2 $.
This gives
\begin{align*}
\left\|\sum_{n=1}^{N} \check{c}_{n} \left( \chi_{x_{n}}-b_{n}\right) \right\|^2_{L_{2}}
	 &\le  
	 C\max_{1\le j \le N} \|\chi_{x_j} -b_j\|^2_{\infty}
	   \left\|( \M_{X_N}^{\sharp})^{-1} \right\|^{2}_{2 \to 2} 
	   \left\| \Sigma_{N}f \right\|^{2}_{2}
	 \le  C
	 h^{2J} 
	   \left\| \M_{X_N}^{-1} \right\|^{2}_{2 \to 2} 
	   \left\| \Sigma_{N}f \right\|^{2}_{2}
\end{align*}
by using Theorem \ref{near_stability}  and  Lemma \ref{difference} (namely $ \|\chi_{x_j} - b_j\|_{\infty}\le C h^{J}$) in the final inequality.
From this, we have
$
\left\|\sum_{n=1}^{N} \check{c}_{n} \left( \chi_{x_{n}}-b_{n}\right) \right\|_{L_{2}}
\le
C
	 h^{J+2-2m} 
	   \left\| \Sigma_{N}f \right\|_{2}$
	   by applying Proposition \ref{tps_stability}.
	   	Thus, we obtain
	   	\begin{align*}
	   		\left\|u- \check{u}_{N;f}\right\|_{L_{2}} \le 
			C\left( h^{m-1} \left\|f \right\|_{W_{2}^{m-1}(\Sph^2)} + 
			h^{J+3-4m} \left\| \Sigma_{N}f \right\|_{\ell_2(X_N)} \right).
	   	\end{align*}
	   	By using $\left\| \Sigma_{N}f \right\|_{\ell_2(X_N)}  \le \sqrt{N} \left\|f \right\|_{W_{2}^{m-1}(\Sph^2)} \le h^{-1}\left\|f \right\|_{W_{2}^{m-1}(\Sph^2)}$, the result follows.
\end{proof}
We point out, that the solution can be obtained by solving an almost sparse linear system (non-zero entries of order $N\log(N)$). The error induced by using local 
Lagrange functions and the global Lagrange functions will decrease with increasing $K$ and hence growing band-width.
Moreover, we point out that our estimates are technically much easier to derive than the error estimates \cite{DAVYDOV2019} as our analysis does not rely on charts.

\appendix
\section{Kernels on the sphere
%$\Sph^2$
}\label{S:kernel_examples}

In this section, 
we consider 
%zonal kernels 
 SBFs of the form $\varPhi(x,y) = \phi(x\cdot y)$. 
%these are called spherical basis functions (SBFs).
%
%
Many SBFs are the restriction to 
$\mathbb{S}^2\times \mathbb{S}^2$ 
from radial, translation invariant kernels (RBFs) on $\R^3$: i.e., restrictions of kernels of the form 
$\varPsi(x,y) = \psi(|x-y|)$.
By writing $ |x-y| =  \sqrt{2-2t} $,
we may write the restricted RBF  in terms of a single real variable $t = x\cdot y$,
so 
\begin{equation} \label{first_restriction_relation}
\phi(t) = \psi(\sqrt{2-2t}).\end{equation}

The restricted polyharmonic splines are a well-known class of conditionally positive SBFs given by
$$\phi(t)= \left\{ \begin{array}{ll }(1-t)^{s} \log (1- t), & s \in \mathbb{N}, \\ 
                                     (1-t)^{s}, & s \in \mathbb{N} - \frac{1}{2}, \end{array} \right. $$
for all $t\in[-1,1]$. In terms of  $R(t) := \sqrt{1-t}$ (which is invertible on $[-1,1]$),
we can express $\phi(t)$ as
$$\phi(t)= \left\{ \begin{array}{ll }R(t)^{2s} \log R(t), & s \in \mathbb{N}, \\ 
                                     R(t)^{2s}, & s \in \mathbb{N} - \frac{1}{2}. \end{array} \right. $$
In this appendix, we focus on strictly positive definite SBFs.
In contrast to the restricted polyharmonic splines, which are discussed throughout the body of the article,
 such kernels require tuning of a shape parameter.

\subsection{Strictly positive definite kernels and shape parameters}\label{SS:shape}
By rescaling the RBF by $\sigma>0$ as
$\varPsi_{\sigma}(x,y) = \psi(\frac{|x-y|}{\sigma})$, 
we obtain a new positive definite 
radial basis function.\footnote{Briefly, by  scaling $k\to k(\cdot/\sigma)$, 
we rescale the Fourier transform, 
$\widehat{k} \to \sigma^d \widehat{k}(\sigma \cdot)$. The positive definiteness of $k(\cdot/\sigma)$
is a consequence of the positivity of its Fourier transform, which is unchanged by rescaling.}
This means that the RBF has a parameter 
(roughly the reciprocal of  the ``shape parameter'' $\epsilon$ defined below) which must me be set.
By changing this parameter, one alters the scaling of the RBF; 
this is desirable (indeed, necessary) for many
practical problems: e.g., for a finite point set $X_N\subset \R^d$, 
the collocation matrix 
$$
\mathbf{\Psi}_{X_N} =\bigl(\varPsi_{\sigma}(x_j,x_k)\bigr)_{j,k}
$$ 
will have a large condition number if $\sigma$ is large relative to the 
nearest neighbor distance $q= \min_{j\neq k} |x_j-x_k|$. 
Thus, in order to treat (e.g., by interpolation, quadrature, etc.)  densely sampled data, one 
 may wish to choose a small value of $\sigma$ to stabilize the problem. 
 This is also the case when using the
 SBF 
 obtained by restricting 
 the RBF $\varPsi_{\sigma}$.
 
 Choosing small $\sigma$ may come at a cost, however:
the shape parameter affects  the native space, both 
by modifying the inner product, and in some cases by altering the underlying set.
The known convergence results for RBF approximation and interpolation are 
most often  assume a  fixed shape parameter. 
Indeed, for most RBFs (which have a positive, continuous Fourier transform and therefore lack a 
{\em Strang-Fix condition} \cite{BR}), choosing $\sigma \propto h$, 
will lead to non-convergence \cite{Powell}. 
To date there are few direct approximation results which deal 
with multi-scale RBF approximation \cite{FloaterIske,NW_multi, HR,WendSloan}.

In what follows, we'll use the notation $R(t) := \sqrt{1-t}$ (which is invertible on $[-1,1]$)
and the modified ``shape parameter'' $\epsilon := \frac{\sqrt{2}}{\sigma}$,
which yields the relation, via (\ref{first_restriction_relation}),
$\phi(t) = \psi(\epsilon R(t))$.

Let us now give a few examples of zonal kernels, along with their native spaces.

\renewcommand{\arraystretch}{2}
\begin{table}
\centering
\begin{tabular}{|c |c |c|}
\hline 
SBF& 
$\phi$
&
Native space
\\
\hline
\begin{tabular}{@{}c@{}}Wendland 2.5 \\\end{tabular}
&
$\left(1-\epsilon R\right)_+^4 \, \bigl(1+4(\epsilon R) \bigr)$
&
 $ H^{2.5}(\Sph^2)$
\\
\hline
\begin{tabular}{@{}c@{}}Wendland 3.5 \\ \end{tabular}
&
$\left(1-\epsilon R\right)_+^6 \, \bigl(3+18(\epsilon R) + 35\bigl(\epsilon R\bigr)^2\bigr)$
&
$ H^{3.5}(\Sph^2)$
\\
\hline
\begin{tabular}{@{}c@{}}Wendland 4.5 \\ \end{tabular}
&
$
\left(1-\epsilon R\right)_+^8 \, 
\bigl(1+8(\epsilon R) + 25\bigl(\epsilon R\bigr)^2 + 32(\epsilon R)^3\bigr)
$
&
$ H^{4.5}(\Sph^2)$
\\
\hline
\begin{tabular}{@{}c@{}}Mat{\' e}rn 2.5\\  \end{tabular}
&
$
e^{-\epsilon R} \,\bigl( 1+ \epsilon R \bigr)
$
&
$H^{2.5}(\Sph^2)$
\\
\hline
\begin{tabular}{@{}c@{}}Mat{\' e}rn 3.5\\  \end{tabular}
&
$
 e^{-\epsilon R} \,\bigl(3 + 3(\epsilon R ) + (\epsilon R)^2 \bigr)
$
&
$H^{3.5}(\Sph^2)$
\\
\hline
\begin{tabular}{@{}c@{}}Mat{\' e}rn 4.5\\  \end{tabular}
&
$
 e^{-\epsilon R(t)} \,\bigl(15 + 15(\epsilon R ) + 6(\epsilon R)^2+ (\epsilon R)^3 \bigr)
$
&
$H^{4.5}(\Sph^2)$
\\
\hline
Gaussian& 
$e^{- \bigl(\epsilon R\bigr)^2}$
&
$\mathcal{N}(\phi)\subset C^{\infty}(\Sph^2)$
\\
\hline
Inverse multiquadric
&
$
{\bigl(1+\bigl(\epsilon R\bigr)^2\bigr)^{-\beta}}
% \bigl(1+\bigl(\epsilon R(t)\bigr)^2\bigr)^{-\beta}
$
&
$\mathcal{N}(\phi)\subset C^{\infty}(\Sph^2)$
\\
\hline
\end{tabular}
\caption{Some SBFs and their native spaces.}
\end{table}

\section{Differential equations on 
the sphere:
%$\mathbb{S}^2$: 
assembling the FD  matrix}\label{S:Practical}
In this section, we consider how to assemble the kernel FD matrix.
 Generally, this requires some understanding of 
the expression of $\opL$ in spherical coordinates (a different method  to construct $\M_{X_N}$
in Cartesian coordinates has been given in \cite{FW})
and how to use it to obtain an expression for $\opL^{(1)} \varPhi$.

The basic challenge  is to assemble the Kansa type matrix
$\K_{X_N} = \bigl(\opL^{(1)} \varPhi(x_j,x_k)\bigr)_{j,k}$ for a few first and second
order linear differential operators $\opL$.

\subsection{Working in coordinates}
In what follows, we use spherical coordinates $(\theta,\varphi)\in [0,2\pi)\times [0,\pi]$
to describe points on the sphere.
This involves  the convention $x =\cos \theta \sin \varphi$, $y= \sin \theta \sin \varphi$
and $z=\cos \varphi$.

The surface gradient is
$\nabla f(\theta, \varphi) 
=
 \frac{\partial f}{\partial \varphi} \vec{\varphi} 
 + 
 \frac{1}{\sin^2 \varphi}\frac{\partial f}{\partial \theta} \vec{\theta}$. 
 Here $\vec{\varphi}$ and $\vec{\theta}$ are the basic tangent vectors for the spherical coordinate
 system:
 $$ 
 \vec{\varphi}  
 = 
 \begin{pmatrix}\cos \theta \cos \varphi\\ \sin \theta \cos \varphi\\ - \sin \varphi\end{pmatrix}
 \quad
 \text{ and }
 \quad
 \vec{\theta} = \begin{pmatrix}-\sin \theta \sin \varphi\\ \cos \theta \sin \varphi\\ 0\end{pmatrix}.$$

The spherical divergence operator applied to a vector field 
$F = F_{\varphi} \vec{\varphi}+F_{\theta} \vec{\theta}$
gives 
\begin{equation}\label{div}
\mathrm{div} F = 
\frac{1}{\sin \varphi}\bigl(\frac{d}{d\theta} (\sin \varphi F_{\theta})
+\frac{d}{d\varphi} (\sin \varphi F_{\varphi})\bigr) 
=
\frac{dF_{\theta}}{d\theta} + \cot \varphi F_{\varphi} + \frac{d F_{\varphi}}{d \varphi}.
\end{equation}

An example of divergence-free  vector field (tangent to the sphere)
is, for an angle $\alpha$, is 
$$\vec{u} =u_{\varphi}\vec{\varphi}+u_{\theta}\vec{\theta}
=  - \sin \alpha \cos \theta \vec{\varphi} + (\cos \alpha+\sin \alpha \cot \varphi \sin \theta)\vec{\theta}.$$
The fact that this is divergence-free is evident from (\ref{div}).
Writing $x= \cos \theta \sin \varphi$, $y= \sin \theta \sin \varphi$ and $z=\cos\varphi$,
we have 
$$\vec{u}(x,y,z) 
=
\begin{pmatrix}
 -\sin\varphi \sin\theta \cos \alpha - \cos \varphi \sin \alpha\\ 
 \sin\varphi \cos\theta \cos \alpha  \\ 
 \sin \varphi \cos \theta \sin\alpha 
\end{pmatrix}
= 
\begin{pmatrix}
 -y \cos \alpha - z\sin \alpha\\ x\cos \alpha  \\ x \sin\alpha 
\end{pmatrix}
.$$

\paragraph{An example of a transport term}
We consider a first order operator of the form $\opL = \vec{u}\cdot \nabla$, where 
$\nabla$ is the ``surface'' gradient and $\vec{u} = u_\phi \vec{\phi} + u_{\theta} \vec{\theta}$ 
is a (tangent) vector field.
Carrying out the (Cartesian) inner product simply produces
$$\opL f = u_{\varphi}\frac{\partial f}{\partial\varphi}\langle \vec{\varphi},\vec{\varphi}\rangle +
u_{\theta}\frac{1}{\sin^2\varphi} \frac{\partial f}{\partial \theta} \langle \vec{\theta},\vec{\theta}\rangle 
=
u_{\varphi}\frac{\partial f}{\partial\varphi} +u_{\theta}  \frac{\partial f}{\partial \theta}
.$$

\subsection{First order operators}
  We apply this to a zonal kernel $\varPhi(x,y) = \phi(x\cdot y)$ in the first argument. 
So we can write the dot product as 
\begin{eqnarray*}
x\cdot y& =& 
\sin \varphi_1 \sin \varphi_2 \bigl(\cos \theta_1 \cos \theta_2 + \sin\theta_1 \sin \theta_2\bigr) 
+ \cos \varphi_1 \cos \varphi_2\\
&=&
\sin \varphi_1 \sin \varphi_2 \bigl(\cos (\theta_1 - \theta_2)\bigr) + \cos \varphi_1 \cos \varphi_2
\end{eqnarray*}
For first order differential problems,  we need $\opL^{(1)}\phi(x\cdot y)$,
which requires an expression for  $\phi'(t)$.
Indeed, note that
\begin{eqnarray}
\frac{\partial }{\partial \varphi}^{(1)} \phi(x\cdot y) &=& 
\left[\cos\varphi_1 \sin \varphi_2 
\bigl( \cos (\theta_1 - \theta_2)\bigr) - \sin \varphi_1 \cos \varphi_2\right]\phi'(x\cdot y)
= (\vec{\varphi}\cdot y) \phi'(x\cdot y)
\label{varphi_dot}\\
\frac{\partial }{\partial \theta}^{(1)} \phi(x\cdot y) &=&
-\sin \varphi_1 \sin \varphi_2 \bigl(\sin(\theta_1 - \theta_2)\bigr) \phi'(x\cdot y)
= (\vec{\theta}\cdot y) \phi'(x\cdot y)
\label{theta_dot}
\end{eqnarray}
From this, it follows that the surface gradient is
\begin{eqnarray}
\nabla^{(1)} \phi(x\cdot y) &=& 
(\vec{\varphi} \cdot y) \phi'(x\cdot y)
\vec{\varphi} + \frac{1}{\sin^2 \varphi}(\vec{\theta} \cdot y) \phi'(x\cdot y)
\vec{\theta}. \label{grad_dot}
\end{eqnarray}
For the transport term we get in this way the formula
$$\opL^{(1)} \phi(x \cdot y) = u_{\varphi} (\vec{\varphi} \cdot y) \phi'(x\cdot y) + u_{\theta} (\vec{\theta} \cdot y) \phi'(x\cdot y).$$

\subsection{Second order operators}\label{lb}
In principle, the kernel derivative formulas 
(\ref{varphi_dot}), (\ref{theta_dot}) and (\ref{grad_dot}) 
along with (\ref{div}) are sufficient
to calculate second order operators in divergence form $\opL =  \mathrm{div} (\vec{a} \nabla f)$
for a sufficiently smooth tensor field $\vec{a}$, although this may be too 
cumbersome to carry out by hand.

\paragraph{Laplace-Beltrami} 
For $\opL = \Delta$ (so $\vec{a} = \mathrm{Id}$), it is much easier to use the rotation invariance
of $\Delta$. In that case, we can write $x\cdot y = \cos(\vartheta)$, with $\vartheta$ the 
solid angle between $x$ and $y$ 
(equivalently, we can perform a rotation mapping $y$ to the north pole). 
In this case, rotation invariance gives $\Delta^{(1)} \phi(x\cdot y) =  \Delta \phi(\cos\vartheta) $
and
$$  \Delta \phi(\cos\vartheta) = 
\frac{1}{\sin \vartheta} \frac{\partial}{\partial \vartheta} 
\bigl( \sin \vartheta \frac{\partial}{\partial \vartheta} \phi(\cos \vartheta)\bigr)
=
\frac{1}{\sin \vartheta} \frac{\partial}{\partial \vartheta} 
\bigl(- \sin^2 \vartheta \phi'(\cos \vartheta)\bigr)
=
-2\cos\vartheta \phi'(\cos\vartheta) +(1-\cos^2\vartheta)\phi''(\cos\vartheta).
$$
which simplifies to
$$
\Delta^{(1)} \phi(x\cdot y) = (1-(x\cdot y)^2)\phi''(x\cdot y) -2(x\cdot y)\phi'(x\cdot y).$$

\paragraph{A second example:} In the basis $\{ \vec{\varphi}, \vec{\theta} \}$, 
we consider the tensor $\vec{a}$ given by
$$\vec{a}= a(\theta,\varphi) \begin{pmatrix} 1 & 0 \\ 0 & \sin^2\varphi \end{pmatrix}, $$
i.e. $\vec{a}$ lies parallel to the metric tensor of the unit sphere. 
In this case, we can simplify the second order differential operator $\opL$.
Because 
$\nabla f 
= \frac{\partial f}{\partial \theta} \vec{\varphi} 
  + \frac{1}{\sin^2\varphi} \frac{\partial f}{\partial \varphi} \vec{\theta}$,
we can write
\begin{eqnarray*} 
\opL f 
&=& \mathrm{div}(\vec{a} \nabla f) \\
&=& 
\frac{1}{\sin \varphi}
\frac{\partial}{\partial \varphi} \sin\varphi\, a(\theta,\varphi)  \frac{\partial f}{\partial \varphi}
+ \frac{\partial}{\partial \theta} a(\theta,\varphi) \frac{\partial f}{\partial \theta}\\
&=&
a(\theta,\varphi) \left( \Delta f - \cot^2 \varphi \frac{\partial^2 f}{\partial \theta^2}\right)    
+
\frac{\partial a}{\partial \varphi} \frac{\partial f}{\partial \varphi}+
\frac{\partial a}{\partial \theta}\frac{\partial f}{\partial \theta}
\end{eqnarray*}

For the kernel function $\phi(x\cdot y)$ centered at a fixed $y$ we therefore get, 
similarly as for the Laplace-Beltrami operator and the surface gradient the formula
\begin{eqnarray*}
 \opL^{(1)} \phi(x \cdot y) 
&=& 
a(\theta,\varphi) 
\left( 
  \Delta^{(1)} \phi(x \cdot y) 
  - 
  \cot^2 \varphi 
  \Big( 
      \phi''(x \cdot y) (\vec{\theta} \cdot y)^2 
      +  
      \phi'(x \cdot y) ( \frac{\partial \vec{\theta}}{\partial \theta} \cdot y) 
  \Big) 
\right) \\
&&
+
\left(
    \frac{\partial a}{\partial \varphi} \vec{\varphi} \cdot y
    +
    \frac{\partial a}{\partial \theta}\vec{\theta} \cdot y
\right)
\phi'(x \cdot y). 
\end{eqnarray*}

Considering this with $a(\varphi,\theta)= (1-\frac12 \cos \varphi)$ gives an elliptic PDE 
considered in \cite{NRW}. This simplifies to

$$ \opL^{(1)} \phi(x \cdot y)
= (1-\frac12 \cos \varphi) \left( \Delta^{(1)} \phi(x \cdot y) 
- \cot^2 \varphi \Big( \phi''(x \cdot y) (\vec{\theta} \cdot y)^2 +
  \phi'(x \cdot y) ( \frac{\partial \vec{\theta}}{\partial \theta} \cdot y) \Big) \right) 
+ \frac{1}{2} \sin{\varphi} \, (\vec{\varphi} \cdot y)  \phi'(x \cdot y), 
$$
where $(\varphi,\theta)$ are the spherical coordinates corresponding to $x$. 

\subsection{Derivatives of well known SBFs}
Given an SBF  of the form $\phi(t) = \psi(\epsilon R(t))$ (as described in \ref{S:kernel_examples}),
the first and second derivatives can be written as:
\begin{equation}\label{deriv_formulas}
%\frac{d}{dt} 
\phi' = - \frac{\epsilon }{2R} \psi'(\epsilon R) =: \epsilon^2 w(\epsilon R)\qquad 
\text{and}
\qquad
 %\frac{d^2}{dt^2} 
 \phi'' = -\frac{\epsilon^3}{2R}w'(\epsilon R).
\end{equation}
\begin{table}[ht]
\centering
\begin{tabular}{|c  |c| c|}
\hline 
SBF
& 
$\phi'$
&
$\phi''$
\\
\hline
Wendland 2.5
&
$10 \epsilon^2 \left(1-\epsilon R\right)_+^3  \frac{1}{\epsilon R}$
& discontinuous
\\
\hline
Wendland 3.5
&
$
28 \epsilon^2 \left(1-\epsilon R\right)_+^5 \, 
\bigl(1 +5(\epsilon R)\bigr)
$
&
$420 \epsilon^4 \left(1-\epsilon R\right)_+^4$\\
\hline
Wendland  4.5
&
$11 \epsilon^2 \left(1-\epsilon R\right)_+^7 \, 
\bigl(1 +7(\epsilon R)+16(\epsilon R)^2\bigr)
$
&
$
132 \epsilon^4 \left(1-\epsilon R\right)_+^6\,
\bigl(1 +6(\epsilon R)\bigr) 
$
\\
\hline
Mat{\' e}rn  2.5
&
$\epsilon^2 e^{-\epsilon R} $
&
discontinuous
\\
\hline
Mat{\' e}rn 3.5&
 $\epsilon^2 e^{-\epsilon R} \,\bigl(1+\epsilon R(t)\bigr)$
 &
$\epsilon^4 e^{-\epsilon R}$
\\
\hline
Mat{\' e}rn 4.5&
$\epsilon^2 e^{-\epsilon R} \,\bigl(3 + 3(\epsilon R ) + (\epsilon R)^2 \bigr)$
&
$ \epsilon^4 e^{-\epsilon R} \,\bigl(1+\epsilon R\bigr)$
\\
\hline
Gaussian
& 
$ \epsilon^2 e^{- \bigl(\epsilon R\bigr)^2}$
&
$\epsilon^4 e^{- \bigl(\epsilon R\bigr)^2}$
\\
\hline
Inverse  multiquadric&
$
\frac{\beta \epsilon^2 }{\left(1+\bigl(\epsilon R\bigr)^2\right)^{\beta +1}}$
&
$\frac{\beta(\beta+1) \epsilon^4 }{\left(1+\bigl(\epsilon R\bigr)^2\right)^{\beta+2}}$
\\
\hline
\end{tabular}
\caption{Derivatives of SBFs appearing in Table 1.}
\end{table}

\bibliographystyle{plain}

%
%\bibliography{fd}
\end{document}